\newcommand{\RMSE}{$\tiny {\rm RMSE}\,\,$}
 \theoremstyle{plain}
\numberwithin{equation}{section}
\numberwithin{figure}{section}
\newtheorem{thm}{Theorem}[section]
\newtheorem{lemma}[thm]{Lemma}
\newtheorem{properties}[thm]{Property}
\newtheorem{pr}[thm]{Proposition}
\newtheorem{remark}[thm]{Remark}
\newcommand{\bbR}{\mathbb{R}}
\newcommand{\rev}[1]{#1}
\newcommand{\as}[1]{#1}
\definecolor{darkred}{rgb}{.7,0,0}
\definecolor{darkgreen}{rgb}{0.4,0.7,0}
\definecolor{darkblue}{rgb}{0,0,0.7}
\begin{document}
\begin{frontmatter}

\title
{Filter Accuracy For the \rev{Lorenz 96 Model: }
Fixed Versus Adaptive Observation Operators}

\author{K.J.H. Law\fnref{myfootnote1}}

\author{D. Sanz-Alonso\fnref{myfootnote2}}

\author{A. Shukla\fnref{myfootnote2}}
\ead{a.shukla@warwick.ac.uk}
\author{A.M. Stuart\fnref{myfootnote2}}

\fntext[myfootnote1]{Computer Science and Mathematics Division, 
Oak Ridge National Laboratory, 
Oak Ridge, TN, 37831, USA.
}
\fntext[myfootnote2]{Mathematics Institute, University of Warwick, Coventry CV4 7AL, UK.}

\begin{abstract}

In the context of filtering chaotic dynamical systems it is well-known that partial observations, if sufficiently informative, can be used to control the inherent uncertainty due to chaos. The purpose of this paper is to investigate, both theoretically and numerically, conditions on the observations of chaotic systems under which they can be accurately filtered. In particular, we highlight the advantage of adaptive observation operators over fixed ones. The Lorenz '96 model is used to exemplify our findings. 

We consider discrete-time and continuous-time observations in our theoretical developments. We prove that, for fixed observation operator, the 3DVAR filter can recover the system state within a neighbourhood determined by the size of the observational noise. It is required that a sufficiently large proportion of the state vector is observed, and an explicit form for such sufficient fixed observation operator is given. Numerical experiments, where the data is incorporated by use of the 3DVAR and extended Kalman filters, suggest that less informative fixed operators than given by our theory can still lead to accurate signal reconstruction. Adaptive observation operators are then studied numerically; we show that, for carefully chosen adaptive observation operators, the proportion of the state vector that needs to be observed is drastically smaller than with a fixed observation operator. Indeed, we show that the number of state coordinates that need to be observed may even be significantly smaller than the total number of positive Lyapunov exponents of the underlying system.

\end{abstract}

\begin{keyword}
3DVAR \sep Lorenz '96 \sep Filter accuracy \sep adaptive observations \sep extended Kalman filter 
\MSC[2010] 00-01\sep  99-00
\end{keyword}

\end{frontmatter}

\section{Introduction}
Data assimilation is concerned with the blending of data and dynamical
mathematical models, often in an online fashion where it is known as
filtering; motivation comes from
applications in the geophysical sciences such as weather forecasting 
\cite{kal03}, oceanography \cite{ben02} and oil reservoir 
simulation \cite{orl08}. Over the last decade there has been a growing
body of theoretical understanding which enables use of the theory of
synchronization in dynamical systems to establish desirable properties
of these filters. This idea is highlighted in the recent book
\cite{hdia13} from a physics perspective and, on the rigorous mathematical
side, has been developed from a pair of papers by Olson, Titi and co-workers
\cite{ho03,hayden2011discrete}, in the context of the Navier-Stokes equation in which
a finite number of Fourier modes are observed.
This mathematical work of Olson and Titi concerns
perfect (noise-free) observations, but the ideas have been extended
to the incorporation of noisy data for the Navier-Stokes equation in the papers \cite{DLSZ12,brett2012accuracy}.
Furthermore the techniques used are quite robust to different
dissipative dynamical systems, and have been demonstrated to
apply in the Lorenz '63 model \cite{hayden2011discrete,lawshuklastuart}, and also to point-wise in space and
continuous time observations \cite{aot14} by use of a control theory perspective
similar to that which arises from the derivation of continuous
time limits of discrete time filters \cite{DLSZ12}.
A key question in the field is to determine relationships
between the underlying dynamical system and the observation
operator which are sufficient to ensure that the signal can be
accurately recovered from a chaotic dynamical system, whose
initialization is not known precisely, by the use of observed
data. Our purpose is to investigate this question
theoretically and computationally. We work in the context of 
the Lorenz '96 model, widely adopted as a useful test model in 
the atmospheric sciences data assimilation 
community {\cite{majda2012filtering, TELA:TELA076}}.

The primary contributions of the paper are:
(i) to theoretically demonstrate the robustness 
of the methodology proposed by Olson and Titi, by extending 
it to the Lorenz '96 model; (ii) to highlight the gap between such theories and what can be achieved in practice, by performing careful numerical experiments;
and (iii) to illustrate the power of
allowing  the observation operator to adapt to the dynamics as
this leads to accurate reconstruction of the signal based on very 
sparse observations.  Indeed our approach in (iii) suggests highly 
efficient new algorithms where the observation operator is allowed 
to adapt to the current state of the dynamical system. The question of 
how to optimize the observation operator
to maximize information was first addressed 
in the context of atmospheric science applications in \cite{lorenzemanuel}. 
{The adaptive observation operators that we propose
are not currently practical for operational atmospheric data
assimilation, but they suggest a key principle which should
underlie the construction of adaptive observation operators: to learn
as much as possible about modes of instability in the dynamics
at minimal cost. }

The outline of the  paper is as follows. In section \ref{sec:S} we introduce the model set up and 
a family of Kalman-based filtering schemes which include as particular cases the Three-dimensional Variational method (3DVAR) and the Extended Kalman Filter (ExKF) used in this paper.
All of these methods may be derived
from sequential application of a minimization principle which
encodes the trade-off between matching the model and matching the data.
In section \ref{l96} we describe the Lorenz '96 model
and discuss its properties that are
relevant to this work. In section \ref{sec:C} we introduce a fixed observation operator which corresponds to observing two thirds of the signal and study theoretical properties of the 3DVAR  filter, in both a continuous 
and a discrete time setting.
In section \ref{sec:N} we introduce an adaptive observation operator which employs knowledge of 
the linearized dynamics over the assimilation window to 
ensure that the unstable directions of the dynamics are observed. We then numerically 
study the performance of a range of filters using the adaptive observations. 
In 
subsection \ref{ssec:3} we consider the 3DVAR  method, 
whilst subsection \ref{ssec:EX} focuses on the Extended Kalman Filter (ExKF). 
In subsection \ref{ssec:EX} we also compare the adaptive observation 
implementation of the ExKF with the AUS scheme \cite{trevisan04}
which motivates our work. The AUS scheme projects the model covariances into the subspaces governed by the unstable dynamics, whereas we use this idea on the observation operators themselves, rather than on the covariances.
In section \ref{sec:CO} we summarize the work and draw some brief conclusions.
In order to maintain a readable flow of ideas,
the proofs of all properties, propositions and theorems stated in the
main body of the text are collected in an appendix.

Throughout the paper we denote by $\langle\cdot,\cdot\rangle$
and $|\cdot|$  the standard Euclidean inner-product
and norm. For positive-definite matrix $C$ we define 
$|\cdot|_{C}:=|C^{-\frac12}\cdot|.$

\section{Set Up}\label{sec:S}
We consider the ordinary differential equation (ODE) 
\begin{equation}\label{mthclf}
\frac{\textrm{d}v}{\textrm{d}t}=\mathcal{F}(v),\quad v(0)=v_0,
\end{equation}
where the solution to \eqref{mthclf} is referred to as the {\em signal}. 
We denote by $\Psi: \bbR^J \times \bbR^+ \to \bbR^J$  the solution operator for the equation
\eqref{mthclf}, so that $v(t)=\Psi(v_0;t).$ In our discrete time filtering developments we assume that, for some fixed $h>0,$ the signal is subject
to observations at times $t_k:=kh,\,k\ge 1.$ We then write 
$\Psi(\cdot):=\Psi(\cdot;h)$ and $v_k:=v(kh)$, with slight abuse of notation to simplify the presentation.
Our main interest is in using partial observations of the discrete time dynamical system
\begin{equation}
\label{eq:ds}
v_{k+1}=\Psi(v_k), \quad k \ge 0,
\end{equation}
to make estimates of the state of the system. To this end we introduce the family of linear
observation operators $\{H_k\}_{k \ge 1}$, where 
$H_k: \mathbb{R}^J \to {\mathbb{R}^{M}}$ 
is assumed to have rank {(which may change with $k$) less than or equal to} $M\le J$. We then consider data $\{y_k\}_{k \ge 1}$ given by
\begin{equation}
\label{eq:obs}
y_{k}=H_k v_k+\nu_k, \quad k \ge 1,
\end{equation}
where we assume that the random and/or systematic error $\nu_k$ (and hence also $y_k$) is contained in $\bbR^M$ 
If $Y_k=\{y_{\ell}\}_{\ell=1}^k$ then the objective of filtering
is to estimate $v_k$ from $Y_k$ given incomplete knowledge of $v_0$;
furthermore this is to be done in a sequential fashion, using
the estimate of $v_k$ from $Y_k$ to determine the estimate of $v_{k+1}$ 
from $Y_{k+1}.$
We are most interested in the case where $M<J$, so that the observations
are partial, and $H_k\bbR^J$ is a strict {$M$ dimensional} subset of $\bbR^J$; in 
particular we address the question of how small $M$ can be chosen whilst still allowing accurate recovery of the signal over long time-intervals.

Let $m_k$ denote our estimate of $v_k$ given $Y_k$. The discrete time filters 
used in this paper have the form
\begin{align}
m_{k+1}&={\rm argmin}_m\left\{\frac{1}{2}\bigl|m-\Psi(m_k)\bigr|_{\widehat{C}_{k+1}}^2+\frac{1}{2}\bigl|y_{k+1}-H_{k+1} m\bigr|_\Gamma^2\right\}.
\label{eq:min}
\end{align}
The norm in the second term is only applied within the $M$-dimensional
image space of $H_{k+1}$, where $y_{k+1}$ lies; 
then $\Gamma$ is realized as a positive-definite $M \times M$ matrix in
this image space, and $\widehat{C}_{k+1}$ is a positive-definite $J \times J$ matrix.
The minimization represents a compromise between respecting the model and
respecting the data, with the covariance weights $\widehat{C}_{k+1}$ and $\Gamma$ 
determining the relative size of the two contributions; 
see \cite{LSZ14} for more details. 
Different choices of $\widehat{C}_{k+1}$ give different filtering methods. For instance, the choice  $\widehat{C}_{k+1}=C_0$ (constant in $k$) corresponds to the 3DVAR method. More sophisticated algorithms, such as the ExKF, allow
$\widehat{C}_{k+1}$ to depend on $m_k.$

All the discrete time algorithms we consider proceed iteratively 
 in the sense that the estimate $m_{k+1}$ is determined
by the previous one, $m_k,$ and the observed data $y_{k+1}$; we
are given an initial condition $m_0$ which is an imperfect estimate of $v_0$. 
It is convenient to see the update $m_k\mapsto m_{k+1}$ as a two-step process. 
In the first one, known as the {\em forecast step}, the estimate $m_k$ is evolved with the dynamics of the underlying model yielding a prediction $\Psi(m_k)$
for the current state of the system. In the second step, known as 
the {\em analysis step}, the forecast is used in conjunction with the observed data $y_{k+1}$ to produce the estimate $m_{k+1}$ of the true state of the underlying system $v_{k+1}$, using the minimization principle \eqref{eq:min}.

 In section \ref{sec:C} we study the continuous time filtering problem for fixed observation operator, where the goal is to estimate the value of a continuous time signal
 $$v(t)=\Psi(v_0,t), \quad t\ge 0,$$
at time $T>0.$ As in the discrete case, it is assumed that only incomplete knowledge of $v_0$ is available. In order to estimate $v(T)$ we assume that we have 
access, at each time $0<t\le T,$ to a (perhaps noisily perturbed) projection of the signal given by a fixed, constant in time, observation matrix $H.$
The continuous time
limit of 3DVAR with constant observation operator $H$, is obtained by setting $\Gamma=h^{-1}\Gamma_0$ 
and $\widehat{C}_{k+1}=C$
and letting $h \to 0.$ The resulting filter, derived in \cite{DLSZ12}, is given by
\begin{equation}\label{c3dv}
\frac{\textrm{d}m}{\textrm{d}t}=\mathcal{F}(m)+CH^*\Gamma_0^{-1}\Big(\frac{\textrm{d}z}{\textrm{d}t}-Hm\Big),
\end{equation}
where the observed data is now $z$ -- formally the time-integral of 
the natural continuous time limit of $y$  --
which  satisfies the stochastic differential equation (SDE)
\begin{equation}\label{c3dv1}
\frac{\textrm{d}z}{\textrm{d}t}=H{v}+H\Gamma_0^{\frac{1}{2}}\frac{\textrm{d}w}{\textrm{d}t},
\end{equation}
for $w$ a unit Wiener process.
This filter has the effect of nudging the solution towards the observed data
in the $H$-projected direction. A similar
idea is used in \cite{aot14} to assimilate pointwise
observations of the Navier-Stokes equation.

For the discrete and continuous time filtering schemes as described
we address the following questions: 
\begin{itemize}
\item[--] how does the filter error $\vert m_k-v_k \vert$ behave as $k\rightarrow \infty$ 
(discrete setting)? 
\item[--]how does the filter error $\vert m(t)-v(t) \vert$ behave as $t\rightarrow \infty$
(continuous setting)?
\end{itemize}
We answer these questions in the section \ref{sec:C} in the context of
the Lorenz '96 model: for a carefully chosen fixed observation operator
we determine conditions under which the large time filter error
is small -- this is filter accuracy. 
We then turn to the adaptive observation operator and focus on the following 
lines of enquiry:
\begin{itemize}
\item[--] how much do we need to observe to obtain filter accuracy? (in other 
words what is the minimum rank of the observation operator required?) 

\item[--] how does adapting the observation operator affect the answer to this
question?
\end{itemize}

We study both these questions numerically in  section \ref{sec:N}, again
focussing on the Lorenz '96 model to illustrate ideas.

\section{Lorenz '96 Model}\label{l96}

The Lorenz '96 model is a lattice-periodic system of coupled nonlinear ODE
whose solution $u=(u^{(1)},\ldots,u^{(J)})^T\in\mathbb{R}^J$ satisfies
\begin{equation}\label{1}
\frac{\textrm{d}u^{(j)}}{\textrm{d}t}= u^{(j-1)}(u^{(j+1)}-u^{(j-2)})-u^{(j)}+F\quad\textrm{for}\;j=1,2,\cdots,J, 
\end{equation}
subject to the periodic boundary conditions
\begin{equation}
\label{eq:bcs}
u^{(0)}=u^{(J)},\quad\quad u^{(J+1)}=u^{(1)},\quad\quad u^{(-1)}=u^{(J-1)}.
\end{equation}
Here $F$ is a forcing parameter, constant in time. 
For our numerical experiments we will choose $F$ 
so that the dynamical system exhibits sensitive 
dependence on initial conditions and positive Lyapunov exponents.
For  example, for $F=8$ and $J=60$ the system is chaotic. 
Our theoretical results apply to any choice of the parameter $F$ and 
{to arbitrarily large system dimension $J$.}

It is helpful to write the model in the following form, widely adopted
in the analysis of geophysical models as dissipative dynamical systems \cite{book:Temam1997}:
\begin{equation}
\frac{\textrm{d}u}{\textrm{d}t}+Au+B(u,u)=f,\quad u(0)=u_0\label{0}
\end{equation}
where 
\[ A=I_{J\times J}, \quad f=\left( \begin{array}{c}
F\\
\vdots\\
F
\end{array}\right)_{J\times 1}\]
and for $u, \tilde{u} \in \bbR^J$
\begin{equation*}
B(u,\tilde{u})=-\frac{1}{2}\left(\begin{array}{c}
\tilde{u}^{(2)}u^{(J)}+u^{(2)}\tilde{u}^{(J)}-\tilde{u}^{(J)}u^{(J-1)}-u^{(J)}\tilde{u}^{(J-1)}\\
\vdots\\
\tilde{u}^{(j-1)}u^{(j+1)}+u^{(j-1)}\tilde{u}^{(j+1)}-\tilde{u}^{(j-2)}u^{(j-1)}-u^{(j-2)}\tilde{u}^{(j-1)}\\
\vdots\\
\tilde{u}^{(J-1)}u^{(1)}+u^{(J-1)}\tilde{u}^{(1)}-\tilde{u}^{(J-2)}u^{(J-1)}-u^{(J-2)}\tilde{u}^{(J-1)}\\
\end{array}\right)_{J\times 1}.
\end{equation*}

We will use the following properties of $A$ and $B$, proved in the Appendix:

\vspace{0.1in}

\begin{properties} For $u, \tilde{u} \in \bbR^J$
\label{p:2.1}
\mbox{}
\begin{enumerate}
 \renewcommand{\theenumi}{\arabic{enumi}}
\item $\langle Au,u\rangle= {|u|}^2$.  \label{itm:a}
\item $\langle B(u,u),u\rangle=0$.\label{itm:b}
\item $B(u,\tilde{u})=B(\tilde{u},u)$.\label{itm:c}
\item $|B(u,\tilde{u})|\leq 2|u||\tilde{u}|$.\label{itm:d}
\item $2\langle B(u,\tilde{u}),u\rangle=-\langle B(u,u),\tilde{u}\rangle.$ \label{itm:f} 
\end{enumerate}
\end{properties}

Property (1) shows that the linear term induces dissipation in the model, whilst
property (2) shows that the nonlinear term is energy-conserving.
Balancing these two properties against the injection of energy through
$f$ gives the existence of an absorbing, forward-invariant ball for equation 
\eqref{0}, as stated in the following proposition,
proved in the Appendix. 

\vspace{0.1in}

\begin{pr}\label{absorbing ball}
Let $K=2JF^2$ and define $\mathcal{B}:=\{u\in \mathbb{R}^{J}: |u|^2\le K\}.$ Then $\mathcal{B}$ is an absorbing, forward-invariant ball for equation (\ref{0}):
for any $u_0 \in \mathbb{R}^{J}$ there is time $T=T(|u_0|) \ge 0$ such
that $u(t) \in \mathcal{B}$ for all $t \ge T.$
\end{pr}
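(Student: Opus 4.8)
The plan is to derive an energy estimate for \eqref{0} by testing the equation against the solution itself, and then to read off both forward invariance and the absorbing property from the resulting scalar differential inequality. Writing $E(t):=|u(t)|^2$, I would take the Euclidean inner product of \eqref{0} with $u$ to obtain
\begin{equation*}
\frac{1}{2}\frac{\textrm{d}}{\textrm{d}t}|u|^2 = -\langle Au,u\rangle - \langle B(u,u),u\rangle + \langle f,u\rangle.
\end{equation*}
The two structural properties of the model do the essential work here: Property~\ref{p:2.1}(\ref{itm:a}) gives $\langle Au,u\rangle = |u|^2$ (linear dissipation), while Property~\ref{p:2.1}(\ref{itm:b}) gives $\langle B(u,u),u\rangle = 0$ (the nonlinearity is energy conserving), so the quadratic term drops out entirely.

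Next I would control the forcing term. Since every entry of $f$ equals $F$ we have $|f|^2 = JF^2$; applying Cauchy--Schwarz followed by Young's inequality in the form $\langle f,u\rangle \le |f||u| \le \tfrac12|u|^2 + \tfrac12|f|^2$ yields
\begin{equation*}
\frac{1}{2}\frac{\textrm{d}}{\textrm{d}t}|u|^2 \le -|u|^2 + \frac{1}{2}|u|^2 + \frac{1}{2}JF^2,
\end{equation*}
that is, $\dot{E} \le -E + JF^2$. The choice of splitting in Young's inequality is the one delicate point: it is what places the asymptotic energy level $JF^2$ strictly inside the ball of squared radius $K = 2JF^2$, leaving the margin needed for strictly inward flow on $\partial\mathcal{B}$.

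Integrating the differential inequality via Gronwall gives the explicit bound
\begin{equation*}
E(t) \le E(0)\,e^{-t} + JF^2\bigl(1 - e^{-t}\bigr) \le E(0)\,e^{-t} + JF^2,
\end{equation*}
which in particular furnishes the a priori bound guaranteeing global existence, so that $u(t)$ is defined for all $t \ge 0$. Forward invariance of $\mathcal{B}$ then follows immediately: on the boundary $E = K = 2JF^2$ we have $\dot{E} \le -2JF^2 + JF^2 = -JF^2 < 0$, so trajectories cannot leave $\mathcal{B}$. The absorbing property follows from the same Gronwall bound: since $\limsup_{t\to\infty} E(t) \le JF^2 < K$, for any $u_0$ there is a time $T = T(|u_0|)$ after which $E(t) \le K$, i.e. $u(t) \in \mathcal{B}$ for all $t \ge T$. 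I expect no genuine obstacle in this argument; the only care required is the constant in Young's inequality and the observation that the energy bound is itself what rules out finite-time blow-up.
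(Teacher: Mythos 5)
Your proof is correct and follows essentially the same route as the paper's: test \eqref{0} against $u$, use Property~\ref{p:2.1} items \ref{itm:a} and \ref{itm:b} to kill the nonlinear term and extract the dissipation, bound $\langle f,u\rangle$ by Young's inequality to get $\frac{\textrm{d}}{\textrm{d}t}|u|^2+|u|^2\le JF^2$, and conclude via Gronwall. Your added remarks on the boundary flux and the $\limsup$ simply spell out the ``result follows'' step that the paper leaves implicit.
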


\section{Fixed Observation Operator}\label{sec:C}

In this section we consider filtering the Lorenz '96 model
with a specific choice of fixed observation matrix $P$ 
(thus $H_k=H=P$) that we now introduce. First, we
let $\{e_j\}^J_{j=1}$ be the standard basis for the Euclidean space $\mathbb{R}^J$ and assume that $J=3J'$ for some $J'\ge1.$
Then the projection matrix $P$ is defined by replacing every third column of the identity matrix $I_{J\times J}$ by the zero vector: 

\begin{equation} P=\left( \begin{array}{c}\label{P}
e_1,\, e_2,\,  0,\,  e_4,\,  e_5,\,  0, \ldots 
\end{array}\right)_{J\times J}.\end{equation}

Thus $P$ has rank $M=2J'.$ We also define its complement $Q$ as 
\[
Q=I_{J\times J}-P.
\]

\begin{remark} 
Note that in the definition of the projection matrix $P$ we could have chosen 
either the first or the second column to be set to zero periodically, instead of 
choosing every third column this way; the theoretical results in the
the remainder of this section would be unaltered by doing this.
\end{remark}
\rev{\begin{remark}
For convenience of analysis we consider the projection operator $H_k=H=P$ as mapping $\bbR^J \to \bbR^J$, and define the observed data $\{y_k\}_{k \ge 1}$ by
\begin{equation}
\label{eq:obs}
y_{k}= P \big( v_k+\nu_k \big), \quad k \ge 1.
\end{equation}
Thus,  the observations $y_k$ and the noise $P\nu_k$ live in $\bbR^J$ but  have at most $M=2J'$ non-zero entries.
\end{remark}}
The matrix $P$ provides sufficiently rich observations to allow the accurate recovery of the signal in the long-time asymptotic regime, both in continuous and discrete time settings. 
The following property of $P$, proved in the appendix,
plays a key role in the analysis:

\vspace{0.1in}

\begin{properties}
\label{p:2.2}
The bilinear form $B(\cdot,\cdot)$ as defined after \eqref{0} satisfies $B(Qu,Qu)=0$ and, furthermore,
there is a constant $c>0$ such that  
$$|\langle B(u,u),\tilde{u}\rangle|\leq c|u||\tilde{u}||Pu|.$$
\end{properties}

All proofs in the following subsections are given in the Appendix.
{\begin{remark}
Note that the results which follow require that the bilinear form B satisfies Properties \ref{p:2.1}
and \ref{p:2.2}. 
While Properties \ref{p:2.1} 
are shared by the prototypical advection operators, for example Lorenz '63 \cite{lawshuklastuart} and Navier Stokes \cite{brett2012accuracy}, Property \ref{p:2.2} 
needs to be verified independently for each given case, and choice of operator Q.  This property is key to closing the arguments below.
\end{remark}}

\subsection{Continuous Assimilation}
In this subsection we assume that  
the data arrives continuously in time. Subsection \ref{ssec:NOC} deals with noiseless data, and the more realistic noisy scenario is studied in subsection \ref{continuous3dvarsection}. We aim to show that, in the large time asymptotic, the filter is close to the truth.
In the absence of noise our results are analogous to those for the partially observed Lorenz '63 and Navier-Stokes models in \cite{ho03};
in the presence of noise the results are similar to those proved in
\cite{DLSZ12} for the Navier-Stokes equation and in \cite{lawshuklastuart} for the Lorenz '63 model, and generalize the work in \cite{tarn} to non-globally Lipschitz vector fields.

\subsubsection{Noiseless Observations}\label{ssec:NOC}
The true solution $v$ satisfies the following equation
\begin{equation}\label{eq:en}
\frac{\textrm{d}v}{\textrm{d}t}+v+B(v,v)=f,\quad v(0)=v_0.
\end{equation}
Suppose that the projection $Pv$ of the true solution is perfectly observed 
and continuously assimilated into the approximate solution $m.$
The {\em synchronization filter} $m$ has the following form:
\begin{equation}\label{eq:cm}
m=Pv+q,
\end{equation}
where $v$ is the true solution given by \eqref{eq:en} and $q$ satisfies the equation \eqref{0} projected by $Q$ to obtain
\begin{equation}\label{eq:cm1}
\frac{\textrm{d}q}{\textrm{d}t}+q+QB(Pv+q,Pv+q)=Qf,\quad q(0)=q_0.
\end{equation}
Equations \eqref{eq:cm} and \eqref{eq:cm1} form the continuous time synchronization filter. The following theorem shows that the approximate solution converges to the true solution asymptotically as $t\rightarrow \infty.$

\vspace{0.1in}

\begin{thm}
\label{t:54}
Let $m$ be given by the equations \eqref{eq:cm}, \eqref{eq:cm1} and let $v$ 
be the solution of the  
equation \eqref{eq:en} with initial data $v_0 \in {\mathcal B}$,
the absorbing ball in Proposition \ref{absorbing ball}, so that $\underset{t \ge 0}{\sup}|v(t)|^2
\le K.$ Then 
$$\underset{t \to \infty}{\rm lim}{|m(t)-v(t)|}^2=0.$$

\end{thm}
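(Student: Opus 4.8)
The plan is to establish the result by a Lyapunov (energy) estimate on the filter error $e := m - v$, exploiting the special structure of $P$ encoded in Property \ref{p:2.2}. The first observation is that $Qv$ satisfies exactly the same evolution as $q$: applying $Q$ to \eqref{eq:en} and using $v = Pv + Qv$ shows that $Qv$ obeys \eqref{eq:cm1}. Since $e = m - v = q - Qv$ by \eqref{eq:cm}, subtracting the two equations and using $Pv + q = v + e$ together with the bilinearity and symmetry of $B$ (Property \ref{p:2.1}, item \ref{itm:c}) gives the closed error equation
\[
\frac{\mathrm{d}e}{\mathrm{d}t} + e + Q\bigl[2B(v,e) + B(e,e)\bigr] = 0.
\]
I would then split $e = p + r$ with $p := Pe$ and $r := Qe$. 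Applying $P$ and using $PQ = 0$ yields $\dot p + p = 0$, so $p(t) = e^{-t}p(0)$ decays exponentially; the substance of the proof is therefore the control of $r$.

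Applying $Q$ to the error equation and taking the inner product with $r$ (noting $Qr = r$) produces
\[
\tfrac12 \frac{\mathrm{d}}{\mathrm{d}t}|r|^2 + |r|^2 + 2\langle B(v,e), r\rangle + \langle B(e,e), r\rangle = 0.
\]
Writing $e = p + r$ and expanding gives $B(v,e) = B(v,p) + B(v,r)$ and $B(e,e) = B(p,p) + 2B(p,r) + B(r,r)$. The crucial simplification — and the reason $P$ must be chosen so that Property \ref{p:2.2} holds — is that $B(r,r) = B(Qe,Qe) = 0$. Using this together with the symmetry \ref{itm:c} and the identity \ref{itm:f} of Property \ref{p:2.1}, the terms $2\langle B(v,r),r\rangle$ and $2\langle B(p,r),r\rangle$ each reduce to a multiple of $\langle B(r,r),\cdot\rangle = 0$ and hence vanish, leaving
\[
\tfrac12 \frac{\mathrm{d}}{\mathrm{d}t}|r|^2 + |r|^2 + 2\langle B(v,p), r\rangle + \langle B(p,p), r\rangle = 0,
\]
in which every nonlinear contribution carries a factor of the exponentially small $p$.

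It then remains to estimate these residual terms using the bound $|B(u,\tilde u)| \le 2|u||\tilde u|$ (Property \ref{p:2.1}, item \ref{itm:d}), the a priori bound $|v(t)| \le K^{1/2}$ guaranteed for large $t$ by $v_0 \in \mathcal{B}$ (Proposition \ref{absorbing ball}), and $|p(t)| = e^{-t}|p(0)|$. A Young inequality converts the result into a differential inequality of the form $\frac{\mathrm{d}}{\mathrm{d}t}|r|^2 + |r|^2 \le C e^{-2t}$, and Gronwall's lemma then gives $|r(t)|^2 \le e^{-t}\bigl(|r(0)|^2 + C\bigr) \to 0$. Since $e = p + r$, this yields $|m(t)-v(t)| \to 0$, as claimed.

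I expect the main obstacle to be precisely the nonlinear term: without Property \ref{p:2.2} the self-interaction $B(e,e)$ of the error need not vanish and the energy argument would not close, so this is the step where the choice of observed coordinates is genuinely used. The role of the absorbing ball is comparatively minor — it is only needed to absorb the transient coupling between the observed component $p$ and the unobserved component $r$. Indeed, if one initializes with $q_0 \in \mathrm{Range}(Q)$ so that $p \equiv 0$, the nonlinear terms vanish identically and one obtains the sharper conclusion $|m(t)-v(t)| = e^{-t}|m(0)-v(0)|$, i.e. exponential synchronization.
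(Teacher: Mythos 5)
Your argument is correct, and it rests on exactly the same mechanism as the paper's proof: an energy estimate on the error in which the identity $B(Qe,Qe)=0$ from Property \ref{p:2.2}, combined with items \ref{itm:c} and \ref{itm:f} of Property \ref{p:2.1}, annihilates the dangerous nonlinear contributions. The difference is that the paper takes the filter to be initialized with $q_0\in\mathrm{Range}(Q)$, so that the error $\delta=q-Qv$ satisfies $Q\delta=\delta$ identically; the nonlinear term then vanishes \emph{entirely} (not just the $r$--$r$ interactions), the energy identity is exact, $\tfrac{\mathrm{d}}{\mathrm{d}t}|\delta|^2+2|\delta|^2=0$, and one gets $|\delta(t)|^2=|\delta(0)|^2e^{-2t}$ with no use of the absorbing ball, no Young inequality, and no Gronwall step. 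You instead allow a nonzero observed component $p=Pe$ of the error, which forces you to track the decoupled decay $p(t)=e^{-t}p(0)$ and to control the cross terms $\langle B(v,p),r\rangle$ and $\langle B(p,p),r\rangle$ via the a priori bound $|v|\le K^{1/2}$ and Gronwall. What your route buys is robustness to an arbitrary initialization of $q$ (the paper's hypothesis $Pq_0=0$ is implicit in its assertion ``$Q\delta=\delta$''), at the cost of a slightly weaker rate and an extra dependence on $\mathcal{B}$; your closing remark correctly identifies that the special case $p\equiv 0$ recovers the paper's sharper exponential synchronization. Both proofs are sound, and your final differential inequality $\tfrac{\mathrm{d}}{\mathrm{d}t}|r|^2+|r|^2\le Ce^{-2t}$ does integrate to $|r(t)|^2\le e^{-t}\bigl(|r(0)|^2+C\bigr)\to 0$ as claimed.
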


The result establishes that in the case of high frequency in time
observations the approximate solution converges to the true solution 
even though the signal is observed partially at frequency $2/3$ in space.
We now extend this result by allowing for noisy observations.
 
\subsubsection{Noisy Observations: Continuous 3DVAR}\label{continuous3dvarsection}
Recall that the continuous time limit of 3DVAR is given by \eqref{c3dv}
where the observed data $z$, the integral of $y$, satisfies the SDE \eqref{c3dv1}.
We study this filter in the case where $H=P$ and under small observation noise $\Gamma_0=\epsilon^2I.$ The 3DVAR model covariance is then taken to be of the size of the observation noise.  We choose $C=\sigma^2 I$, where $\sigma^2=\sigma^2(\epsilon)=\eta^{-1}\epsilon^2,$ for some $\eta>0.$ 
Then equations \eqref{c3dv} and \eqref{c3dv1} can be rewritten as
\begin{equation}\label{eq:m}
\frac{\textrm{d}m}{\textrm{d}t}=\mathcal{F}(m)+\frac{1}{\eta}\Big(\frac{\textrm{d}z}{\textrm{d}t}-Pm\Big)
\end{equation}
where
\begin{equation}\label{eq:z}
\frac{\textrm{d}z}{\textrm{d}t}=Pv+\epsilon P\frac{\textrm{d}w}{\textrm{d}t},
\end{equation}
and $w$ is a unit Wiener process.
Note that the parameter $\epsilon$ represents both the size of the 3DVAR observation covariance and the size of the noise in the observations.

The reader will notice that the continuous time synchronization filter
is obtained from this continuous time 3DVAR filter if $\epsilon$ is
set to zero and if the (singular) limit $\eta \to 0$ is taken. 
The next theorem shows that the approximate solution $m$ converges to a neighbourhood of the 
true solution $v$ where the size of the neighbourhood depends upon $\epsilon.$ Similarly as in \cite{lawshuklastuart} and \cite{DLSZ12}  it is required that $\eta,$ the ratio between the size of observation and model covariances, is sufficiently small.
The next theorem is thus a natural generalization of Theorem \ref{t:54}
to incorporate noisy data.

\vspace{0.1in}

\begin{thm}
\label{t:55}
Let $(m,z)$ solve the equations \eqref{eq:m}, \eqref{eq:z} and let $v$ solve 
the equation \eqref{eq:en} with the initial data $v(0) \in {\mathcal B}$,
the absorbing ball of Proposition \ref{absorbing ball}, so that $\sup_{t \ge 0}|v(t)|^2
\le K.$ Then for the constant $c$ as given in the Property \ref{p:2.2}, given 
$\eta <\frac{4}{c^2K}$ we obtain
\begin{equation}\label{45}
\mathbb{E}{|m(t)-v(t)|}^2\le e^{-\lambda t}{|m(0)-v(0)|}^2+\frac{2J\epsilon^2}{3\lambda\eta^2}(1-e^{-\lambda t}),
\end{equation}
where $\lambda$ is defined by 
\begin{equation}\label{56}
\lambda=2\left(1-\frac{c^2 \eta K}{4}\right).
\end{equation}
Thus
$${\rm limsup}_{t \to \infty} \mathbb{E}{|m(t)-v(t)|}^2\le
a\epsilon^2,$$
where $a=\frac{2J}{3\lambda \eta^2}$ does not depend on the strength of the observation noise, $\epsilon.$

\end{thm}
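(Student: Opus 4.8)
The plan is to track the evolution of the error $e := m - v$. Subtracting \eqref{eq:en} from \eqref{eq:m}, and using $\mathcal{F}(u)=f-u-B(u,u)$ together with the bilinearity and symmetry of $B$ (Property \ref{p:2.1}(3)), I would first expand $B(m,m)-B(v,v)=2B(v,e)+B(e,e)$, so that $e$ solves the It\^o SDE
\begin{equation*}
\mathrm{d}e = \Bigl[-e - \tfrac{1}{\eta}Pe - 2B(v,e) - B(e,e)\Bigr]\mathrm{d}t + \tfrac{\epsilon}{\eta}P\,\mathrm{d}w .
\end{equation*}

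Next I would apply It\^o's formula to $|e|^2$. The diffusion contributes the deterministic quadratic-variation piece $\tfrac{\epsilon^2}{\eta^2}\mathrm{tr}(PP^*)\,\mathrm{d}t$; since $P$ is the diagonal projection with $\mathrm{tr}(P)=M=2J/3$, this is exactly $\tfrac{2J\epsilon^2}{3\eta^2}\,\mathrm{d}t$, which is the origin of the constant in \eqref{45}. The drift contributes $2\langle e,\cdot\rangle$ applied to each bracketed term, and here the algebra of $B$ does the work: by Property \ref{p:2.1}(2) the cubic term $\langle e,B(e,e)\rangle$ vanishes, and by Property \ref{p:2.1}(5) together with symmetry the cross term rewrites as $-4\langle e,B(v,e)\rangle = 2\langle B(e,e),v\rangle$. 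Using $\langle e,Pe\rangle=|Pe|^2$, this gives, up to the martingale term,
\begin{equation*}
\mathrm{d}|e|^2 = \Bigl[-2|e|^2 - \tfrac{2}{\eta}|Pe|^2 + 2\langle B(e,e),v\rangle + \tfrac{2J\epsilon^2}{3\eta^2}\Bigr]\mathrm{d}t + (\text{martingale}) .
\end{equation*}

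The crux is controlling the sign-indefinite term $2\langle B(e,e),v\rangle$, and this is precisely where Property \ref{p:2.2} is indispensable: it yields $|\langle B(e,e),v\rangle|\le c|e||v||Pe|\le c\sqrt{K}|e||Pe|$, using $|v|^2\le K$ on the absorbing ball. I would then apply Young's inequality in the sharp form $2c\sqrt{K}|e||Pe|\le \tfrac{\eta c^2 K}{2}|e|^2 + \tfrac{2}{\eta}|Pe|^2$, calibrated so that the $\tfrac{2}{\eta}|Pe|^2$ it produces exactly cancels the favourable nudging term $-\tfrac{2}{\eta}|Pe|^2$. What survives is $\mathrm{d}|e|^2 \le [-\lambda|e|^2 + \tfrac{2J\epsilon^2}{3\eta^2}]\mathrm{d}t + (\text{martingale})$ with $\lambda = 2(1-c^2\eta K/4)$ as in \eqref{56}; the hypothesis $\eta<4/(c^2K)$ is exactly what guarantees $\lambda>0$. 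Taking expectations eliminates the martingale, and an integrating-factor (Gr\"onwall) argument with $e^{\lambda t}$ integrates $\tfrac{\mathrm{d}}{\mathrm{d}t}\mathbb{E}|e|^2 \le -\lambda\mathbb{E}|e|^2 + \tfrac{2J\epsilon^2}{3\eta^2}$ to give \eqref{45}; letting $t\to\infty$ then produces the stated $\limsup$ bound with $a=\tfrac{2J}{3\lambda\eta^2}$.

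I expect the main obstacle to be the interplay between the two algebraic identities for $B$ and the sharp Young splitting: one must route the cross term into $\langle B(e,e),v\rangle$ (rather than leaving it as $\langle B(v,e),e\rangle$) so that Property \ref{p:2.2} applies and manufactures precisely a $|Pe|$ factor, and then tune the Young constant so the resulting $|Pe|^2$ matches the dissipation supplied by the nudging. A secondary technical point is justifying that the stochastic integral is a genuine martingale despite the quadratic, non-globally-Lipschitz drift; this can be handled by a standard localization/stopping-time argument, exploiting that $v$ stays in the absorbing ball so that the a priori estimate above bounds $\mathbb{E}|e|^2$ uniformly in time.
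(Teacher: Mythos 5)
Your proposal is correct and follows essentially the same route as the paper: the same error SDE, the same use of Properties \ref{p:2.1}(2),(3),(5) and Property \ref{p:2.2} to reduce the drift to $-\langle B(\delta,\delta),v\rangle$, the same Young splitting calibrated to cancel the nudging term $\tfrac{1}{\eta}|P\delta|^2$ (the paper packages this as Lemma \ref{l3} with $\theta=c^2K\eta/2$), and the same It\^o/Gr\"onwall conclusion. Your remark on localizing the stochastic integral is a welcome technical point that the paper leaves implicit.
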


\subsection{Discrete Assimilation}
\label{sec:D}
We now turn to discrete data assimilation. Recall that filters
in discrete time can be split 
into two steps: forecast and analysis.  In this section we establish conditions under which the corrections made at the analysis steps overcome the divergence inherent due to
nonlinear instabilities of the model in the forecast stage. 
As in the previous section we study first the 
case of noiseless data, generalizing the work of \cite{hayden2011discrete} from the Navier-Stokes
and Lorenz '63 models to include the Lorenz '96 model, and then study the
case of 3DVAR, generalizing the work in \cite{brett2012accuracy,lawshuklastuart}, which concerns
the Navier-Stokes and Lorenz '63 models respectively, to the Lorenz '96 model.

\subsubsection{Noiseless Observations}
\label{ssec:D1}

Let $h>0$, and set $t_k:=kh,\,k\ge 0.$ For any function $g:\bbR^+\to\bbR^J,$ continuous in $[t_{k-1},t_k),$ we denote  $g(t_k^-):=\lim_{t\uparrow t_k}g(t).$  
Let $v$ be a solution of equation \eqref{eq:en} with $v(0)$ in
the absorbing forward-invariant ball $\mathcal{B}$. 
The discrete time synchronization filter $m$ of \cite{hayden2011discrete} 
may be expressed as follows: 
\begin{subequations}
\label{8}
\begin{align}
&\frac{\textrm{d}m}{\textrm{d}t}+m+B(m,m)=f, \quad t\in(t_k,t_{k+1}),\\
&m(t_k)=Pv(t_k)+Qm(t_k^-).
\end{align}
\end{subequations}
Thus the filter consists of solving the underlying dynamical model,
by resetting the filter to take the value $Pv(t)$ in the subspace
$P\mathbb{R}^J$ at every time $t=t_k.$ The following theorem shows 
that the filter $m$ converges to the true signal $v.$

\vspace{0.1in}  
\begin{thm}\label{t:56}
Let $v$ be a solution of the equation \eqref{eq:en} with $v(0)\in \mathcal{B}.$ Then there exists $h^*>0$ such that for any $h\in (0,h^*]$ the approximating solution $m$ given by \eqref{8} converges to $v$ as $t\rightarrow \infty$.
\end{thm}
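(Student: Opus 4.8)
The plan is to work with the error $e := m - v$ and show that its energy $|e|^2$ contracts over each assimilation window of length $h$, provided $h$ lies below a threshold $h^*$. On each interval $(t_k,t_{k+1})$ both $v$ and $m$ solve \eqref{eq:en}, so writing $m = v+e$ and using symmetry of $B$ gives $\frac{de}{dt} + e + 2B(v,e) + B(e,e) = 0$. Pairing with $e$, the cubic term vanishes by Property \ref{p:2.1}(2), while $2\langle B(v,e),e\rangle = -\langle B(e,e),v\rangle$ by Property \ref{p:2.1}(5); invoking Property \ref{p:2.2} together with $|v|^2 \le K$ then yields the basic differential inequality $\frac{1}{2}\frac{d}{dt}|e|^2 + |e|^2 \le c\sqrt{K}\,|e|\,|Pe|$ on each window. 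The key structural fact is that the nonlinear coupling is always proportional to $|Pe|$, which the reset keeps small.

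The reset is the mechanism driving $|Pe|$ down. Since $m(t_k) = Pv(t_k) + Qm(t_k^-)$, the analysis step leaves the $Q$-component of the error untouched and zeroes the $P$-component: $Pe(t_k^+) = 0$ and $Qe(t_k^+) = Qe(t_k^-)$, so $|e(t_k^+)|^2 = |e(t_k^-)|^2 - |Pe(t_k^-)|^2 \le |e(t_k^-)|^2$, i.e. the analysis step never increases energy. Next I would control the growth of $Pe$ across a single window. Projecting the error equation and pairing with $Pe$ gives $\frac{1}{2}\frac{d}{dt}|Pe|^2 = -|Pe|^2 - 2\langle B(v,e),Pe\rangle - \langle B(e,e),Pe\rangle$; bounding the first nonlinear term via Property \ref{p:2.1}(4) and the second via Property \ref{p:2.2} with $\tilde u = Pe$ (so $|\langle B(e,e),Pe\rangle| \le c|e||Pe|^2$), and using the initial condition $Pe(t_k^+)=0$, a Gronwall argument produces $|Pe(t)| \le C h \sup_{[t_k,t]}|e|$ for $t\in[t_k,t_{k+1}]$, with $C$ depending only on $K$ and $c$ once $|e|$ is known to be bounded.

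Substituting this bound into the energy inequality gives, over one window, $|e(t_{k+1}^-)|^2 \le (e^{-2h} + O(h^2))|e(t_k^+)|^2$, and combining with the non-increase at the reset yields a per-step contraction $|e(t_{k+1}^+)|^2 \le \rho(h)\,|e(t_k^+)|^2$ with $\rho(h) = 1 - 2h + O(h^2)$. Choosing $h^*$ so that $\rho(h) < 1$ for all $h \in (0,h^*]$ and iterating forces $|e(t_k^+)| \to 0$, while the within-window inequality controls $|e(t)|$ between reset times; hence $m(t) \to v(t)$ as $t\to\infty$.

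I expect the main obstacle to be the a priori bound $|e(t)| \le R$ (equivalently, uniform boundedness of $m$) needed to freeze $C$ independently of $h$. A naive energy balance for $|m|^2$ is delicate, because each reset can inject energy into the $P$-subspace and the dissipation removes only an $O(h)$ fraction per step. The resolution is a bootstrap: the injected energy equals $|Pv(t_k)|^2 - |Pm(t_k^-)|^2 \le 2\sqrt{K}\,|Pe(t_k^-)|$ rather than $O(K)$, so once the window estimate $|Pe| = O(h|e|)$ is in force the dissipation $-|m|^2$ confines $m$ to a ball of radius $O(\sqrt{K})$ uniformly in $h$. Closing this loop — establishing uniform boundedness of $m$ and the $|Pe|$ estimate simultaneously, and verifying that $\sup_{[t_k,t]}|e|$ stays comparable to $|e(t_k^+)|$ over a short window — is the technical heart of the argument, and the admissible threshold $h^*$ emerges from it.
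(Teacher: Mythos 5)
Your argument follows the same route as the paper's proof: the reset zeroes $Pe$, the energy inequality $\tfrac12\tfrac{d}{dt}|e|^2+|e|^2\le c\sqrt{K}\,|e|\,|Pe|$ is driven entirely by $|Pe|$, the window estimate shows $|Pe|$ grows from zero only by $O(h)\sup|e|$, and combining the two gives a per-window contraction factor $\rho(h)<1$ for $h$ below a threshold. All of these steps are correct and match the appendix (there the window estimate appears as $|P\delta(s)|^2\le A_1(s)|\delta_k|^2$ with $A_1(0)=0$, and the contraction factor as $\gamma=B_1(h)$ with $B_1(0)=1$, $B_1'(0)=-1$).

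Where you diverge is in closing the loop you correctly flag at the end, and here your proposed resolution is more complicated than necessary. No uniform bound on $m$, no energy balance for $|m|^2$, and no bootstrap are needed. The comparability of $\sup_{[t_k,t]}|e|$ with $|e(t_k^+)|$ is unconditional: pairing the error equation with $e$, the cubic term $\langle B(e,e),e\rangle$ vanishes \emph{exactly} (the same cancellation you already used), and $2\langle B(v,e),e\rangle=-\langle B(e,e),v\rangle$ is bounded by $2\sqrt{K}|e|^2$ using only $v\in\mathcal{B}$, giving $\tfrac{d}{dt}|e|^2\le \beta|e|^2$ with $\beta=2(2\sqrt{K}-1)$ regardless of the size of $e$ or $m$ (this is Lemma \ref{lm1}). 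The only remaining dependence on the error size is the constant in the quadratic term $2|e|^2|Pe|$ of the $Pe$-equation, and the paper freezes it by an induction maintaining $|e_k|\le R_0:=|e_0|$ --- which is automatic once the per-window factor is below one. So the ``technical heart'' you defer is a two-line consequence of estimates you have already written down; your bootstrap via injected energy at the reset, while not wrong in spirit, attacks the problem through $|m|^2$ where the natural quantity is $|e|^2$, for which the reset is monotone and the forecast growth is controlled a priori.
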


\subsubsection{Noisy Observations: Discrete 3DVAR}\label{noisydiscrete3DVAR}

Now we consider the situation where the data is noisy and $H_k=P.$ We
employ the 3DVAR filter which results from the minimization principle 
\eqref{eq:min} in the case where $\widehat{C}_{k+1}={\sigma}^2I$ and $\Gamma=\epsilon^2I.$ Recall the true signal is determined by the equation \eqref{eq:ds} and the observed data by the equation \eqref{eq:obs}, now written in terms of the true signal $v_k=v(t_k)$ solving the equation \eqref{0} with $v_0\in  \mathcal{B}$. Thus
\begin{align}
{v_{k+1}}& =  {\Psi(v_k),\quad v_0 \in \mathcal{B},}\notag\\
{y_{k+1}}& =  {Pv_{k+1}+\nu_{k+1}}.\notag
\end{align}
If we define $\eta:= \frac{{\epsilon}^2}{{\sigma}^2}$ then the  3DVAR
filter can be written as 
\begin{equation*}
m_{k+1}= \Big( \frac{\eta}{1+\eta}P+Q \Big)\Psi(m_k)+ \frac{1}{1+\eta}y_{k+1},
\end{equation*}
after noting that $Py_{k+1}=y_{k+1}$ because $P$ is a projection and $\nu_{k+1}$ is
assumed to lie in the image of $P$.
In fact the data has the following form:
\begin{align*}
y_{k+1} &= Pv_{k+1}+P\nu_{k+1}\notag \\
&= P\Psi (v_k)+\nu_{k+1}.
\end{align*}

Combining the two equations gives
\begin{equation}\label{eq:me}
m_{k+1}= \Big( \frac{\eta}{1+\eta}P+Q \Big)\Psi(m_k)+ \frac{1}{1+\eta}\Bigl(P\Psi (v_k)+\nu_{k+1}\Bigr).
\end{equation}
We can write the equation for the true solution $v_k$, given by \eqref{eq:ds},
in the following form:
\begin{equation}\label{eq:ve}
v_{k+1}= \Big( \frac{\eta}{1+\eta}P+Q \Big)\Psi(v_k)+ \frac{1}{1+\eta}P\Psi (v_k).
\end{equation}
Note that $v_k=v(t_k)$ where $v(\cdot)$ solves \eqref{eq:en}.
We are interested in comparing the output of the
filter, $m_k,$ with the true signal $v_k.$
Notice that if the noise $\nu_k$ is set to zero and if the limit $\eta \to 0$
is taken then the filter becomes
$$m_{k+1}=P\Psi (v_k)+Q\Psi(m_k)$$
which is precisely the discrete time synchronization filter.
Theorem \ref{thm:dis} below will reflect this observation,
constituting a noisy variation on Theorem \ref{t:56}.

We will assume that the $\nu_k$ are independent random variables that satisfy the bound $|\nu_k|\le \epsilon$,
thereby linking the scale of the covariance $\Gamma$ employed in 3DVAR to
the size of the noise.
We let $\|\cdot\|$ be the norm defined by $\|z\|:=|z|+|Pz|,$ $z\in \mathbb{R}^J.$
\bigskip

\begin{thm}\label{thm:dis}
Let $v$ be the solution of the equation (\ref{eq:en}) with $v(0)\in \mathcal{B}.$ Assume that $\{\nu_k\}_{k\ge 1}$ is a sequence of independent bounded random variables such that, for every $k,$ $|\nu_k|\le \epsilon.$
Then there are choices (detailed in the proof in the appendix) of assimilation step $h>0$ and parameter $\eta>0$ sufficiently small such that, for some  $\alpha\in(0,1)$ and provided that the noise $\epsilon>0$ is small enough,  the error satisfies
\begin{equation}
\label{eq:bnd}
\|m_{k+1}-v_{k+1}\|\le \alpha{\|m_k-v_k\|}+2{\epsilon}.
\end{equation}
Thus, there is $a>0$ such that
$$\limsup_{k\to\infty} \|m_k-v_k\|\le a\epsilon.$$ 

\end{thm}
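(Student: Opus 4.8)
The plan is to reduce everything to the one-step bound \eqref{eq:bnd}: once $\|m_{k+1}-v_{k+1}\|\le \alpha\|m_k-v_k\|+2\epsilon$ with $\alpha\in(0,1)$ is established, summing the resulting geometric series immediately gives $\limsup_{k\to\infty}\|m_k-v_k\|\le \frac{2\epsilon}{1-\alpha}=:a\epsilon$, which is the asserted conclusion. So the proof concentrates entirely on the one-step estimate. First I would subtract \eqref{eq:ve} from \eqref{eq:me} to obtain, writing $e_k:=m_k-v_k$ and $\delta_k:=\Psi(m_k)-\Psi(v_k)$,
\[
e_{k+1}=\Big(\tfrac{\eta}{1+\eta}P+Q\Big)\delta_k+\tfrac{1}{1+\eta}\nu_{k+1}.
\]
Because $P,Q$ are complementary orthogonal projections and $\nu_{k+1}$ lies in the range of $P$, I would split $\|e_{k+1}\|=|e_{k+1}|+|Pe_{k+1}|$ into its $Q$- and $P$-parts and bound it by
\[
\|e_{k+1}\|\le |Q\delta_k|+\tfrac{2\eta}{1+\eta}|P\delta_k|+\tfrac{2}{1+\eta}|\nu_{k+1}|,
\]
where the last term is at most $2\epsilon$ and already supplies the additive constant in \eqref{eq:bnd}. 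The key point is that the strong analysis step damps the observed ($P$) part of the forecast error by the small factor $\tfrac{2\eta}{1+\eta}$ while leaving the unobserved ($Q$) part untouched.

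The heart of the argument is to estimate the forecast of the error, $\delta_k$, in both $|P\cdot|$ and $|\cdot|$, using the Lorenz '96 structure. On each window $(t_k,t_{k+1})$ both $m$ and $v$ solve \eqref{eq:en}, so $e(t)=m(t)-v(t)$ satisfies $\dot e+e+B(m+v,e)=0$ by bilinearity and symmetry (Property \ref{p:2.1}, item 3). Testing against $e$ and invoking Property \ref{p:2.1} (item 5) together with Property \ref{p:2.2}, and using $v_k\in\mathcal{B}$ and a uniform filter bound $|m+v|\le R$, I would derive $\frac{d}{dt}|e|^2+|e|^2\le \tfrac{c^2R^2}{4}|Pe|^2$; testing the $P$-projected equation against $Pe$ and using Property \ref{p:2.1} (item 4) gives $\frac{d}{dt}|Pe|\le 2R|e|$. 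For $h$ small these can be bootstrapped (first $|e(t)|\le 2|e_k|$ on the window, then the integral bounds) to yield
\[
|P\delta_k|\le |Pe_k|+C_1 h|e_k|,\qquad |Q\delta_k|\le |\delta_k|\le \big(e^{-h/2}+C_2 h^{3/2}\big)|e_k|+C_3\sqrt{h}\,|Pe_k|,
\]
with constants depending only on $c,R$. The essential gain is that the full-norm growth of $\delta_k$ along the unstable directions is controlled by the \emph{observed} component $|Pe_k|$, which is precisely the component the analysis step contracts.

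Substituting these forecast bounds into the norm split and collecting the coefficients of $|e_k|$ and $|Pe_k|$ gives
\[
\|e_{k+1}\|\le \Big(e^{-h/2}+C_2 h^{3/2}+\tfrac{2\eta}{1+\eta}C_1 h\Big)|e_k|+\Big(C_3\sqrt{h}+\tfrac{2\eta}{1+\eta}\Big)|Pe_k|+2\epsilon,
\]
and choosing parameters in the correct order closes the contraction: fix $h$ small enough that $e^{-h/2}+C_2 h^{3/2}<1$ and $C_3\sqrt{h}<\tfrac12$, then take $\eta$ small enough that both coefficients fall below a common $\alpha\in(0,1)$, which yields \eqref{eq:bnd}. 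I expect the main obstacle to be twofold. First, the dynamics is chaotic, so the forecast map alone is expansive along the positive-Lyapunov directions; contraction can only emerge from the interplay between the slaving estimate (the $Q$-error is controlled by the $P$-error through Property \ref{p:2.2}, with $B(Qu,Qu)=0$ removing the unobserved self-interaction) and the damping of the $P$-error at analysis, and making $h$ and $\eta$ cooperate in the right order is delicate. Second, every energy estimate rests on the uniform bound $|m+v|\le R$ over each window, so one must separately show that the noise-driven filter iterate $m_k$ is itself absorbed into a bounded set; this, rather than the contraction, is the likely source of the hypothesis that $\epsilon$ be sufficiently small.
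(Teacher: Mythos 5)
Your proposal is correct and follows essentially the same route as the paper's proof: the same $P$/$Q$ splitting of the update in the norm $\|\cdot\|=|\cdot|+|P\cdot|$, the same two energy estimates over the assimilation window (the observed part growing like $O(h)$ times the full error, the full error decaying like $e^{-h/2}$ up to an $O(\sqrt{h})$ multiple of the observed part, via Property \ref{p:2.2}), and the same parameter ordering ($h$ first, then $\eta$) to force both coefficients below a common $\alpha<1$; your asymptotic constants match the paper's explicit functions $A_1,B_1,B_2$ of Lemma \ref{lem:dis}. The one obstacle you flagged is resolved in the paper exactly as you anticipated: the a priori bound on the iterates is maintained by induction on $\|\delta_k\|\le R_0$, which is precisely where the hypothesis $\alpha R_0 + 2\epsilon \le R_0$ (i.e.\ $\epsilon$ small) enters, the only cosmetic difference being that the paper's growth estimate (Lemma \ref{lm1}) needs no bound on $m$ at all since $\langle B(\delta,\delta),\delta\rangle=0$.
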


\section{Adaptive Observation Operator}\label{sec:N}

The theory in the previous section demonstrates that
accurate filtering of chaotic models is driven by observing enough of the 
dynamics to control the exponential separation of trajectories in the
dynamics. However the fixed observation operator $P$ that we 
analyze requires observation of $2/3$ of the system state vector.
Even if the observation operator is fixed our numerical results
will show that observation of this proportion of the state is not 
necessary to obtain accurate filtering. Furthermore, by adapting the
observations to the dynamics, we will be able to obtain the same quality of
reconstruction with even fewer observations. In this section
we will demonstrate
these ideas in the context of noisy discrete time filtering, and
with reference to the Lorenz '96 model. 

The variational equation for the dynamical system \eqref{mthclf} is given by 
\begin{equation}\label{eq:vs}
\frac{\textrm{d}}{\textrm{d}t} D\Psi(u,t)=D{\mathcal F}\bigl(\Psi(u,t)\bigr)\cdot D\Psi(u,t);\quad \quad D\Psi(u,0)=I_{J\times J},
\end{equation}
using the chain rule.
The solution of the variational equation gives the derivative matrix of the solution operator $\Psi$, which in turn characterizes the behaviour of $\Psi$ with respect to small variations in the initial value $u$. 
Let $L_{k+1}:=L(t_{k+1})$ be the solution of the 
variational equation \eqref{eq:vs} over the assimilation window $(t_{k},t_{k+1})$, initialized at $I_{J\times J}$, given as 
\begin{equation}
\label{eq:leq}
\frac{\textrm{d}L}{\textrm{d}t}=D{\mathcal F}\bigl(\Psi(m_{k},t-t_{k})\bigr)L, 
\quad t \in (t_{k},t_{k+1}); \quad\quad L(t_{k})=I_{J\times J}.
\end{equation}
Let $\{\lambda_k^j,\psi_k^j\}_{j=1}^J$ denote eigenvalue/eigenvector pairs
of the matrix $L_{k+1}^TL_{k+1},$ where the eigenvalues (which are, of course, real)
are ordered to be non-decreasing, and the eigenvectors are orthonormalized
with respect to the Euclidean inner-product $\langle \cdot, \cdot \rangle.$
We define the adaptive observation operator 
$H_k$ to be 
\begin{equation}
\label{eq:hk}
H_k:=H_0(\psi_k^1,\cdots,\psi_k^J)^T
\end{equation} 
where
\begin{eqnarray}
H_0=\left(
\begin{array}{cc}
0 & 0\\
0 & I_{M \times M}
\end{array}
\right).
\end{eqnarray}
Thus $H_0$ and $H_k$ both have rank $M$.
Defined in this way we see that for any given $v\in \mathbb{R}^J$ the projection $H_k v$ is given by
the vector 
$$\Bigl(0,\cdots, 0, \langle \psi_k^{J-M+1}, v \rangle, \cdots, \langle \psi_k^{J}, v \rangle\Bigr)^T,$$
that is the projection of $v$ onto the $M$ eigenvectors of $L_{k+1}^TL_{k+1}$
with largest modulus.

\begin{remark}
In the following work we consider the leading eigenvalues and corresponding eigenvectors of the matrix $L_k^TL_k$ to track the unstable (positive Lyapunov growth) directions. To leading order in $h$ it is equivalent to consider the matrix $L_kL_k^T$ in the case of frequent observations (small $h$) as can be seen by the following expressions
\begin{eqnarray}
L_k^TL_k&=&(I+hD\mathcal{F}_k)^T(I+hD\mathcal{F}_k)+\mathcal{O}(h^2)\notag\\
&=&I+h(D\mathcal{F}_k^T+D\mathcal{F}_k)+\mathcal{O}(h^2)\notag
\end{eqnarray}
and
\begin{eqnarray}
L_kL_k^T&=&(I+hD\mathcal{F}_k)(I+hD\mathcal{F}_k)^T+\mathcal{O}(h^2)\notag\\
&=&I+h(D\mathcal{F}_k+D\mathcal{F}_k^T)+\mathcal{O}(h^2),\notag
\end{eqnarray}
where $D\mathcal{F}_k= D\mathcal{F}(m_k)$.

{Of course for large intervals $h$, the above does not hold, and the difference between $L_k^TL_k$ and $L_kL_k^T$ may be substantial.
It is however clear that these operators have the same eigenvalues, with the eigenvectors of $L_kL_k^T$ corresponding to $\lambda_k^j$ 
given by $L_k \psi_k^j$ for the corresponding eigenvector $\psi_k^j$ of $L_k^T L_k$.  That is to say, for the linearized deformation map $L_k$, 
the direction $\psi_k^j$ is the pre-deformation principle direction corresponding to the principle strain $\lambda_k^j$ induced by the deformation.
The direction $L_k \psi_k^j$ is the post-deformation principle direction corresponding to the principle strain $\lambda_k^j$.  
The dominant directions chosen in Eq. \eqref{eq:hk} are those directions corresponding to the greatest growth over the interval $(t_k,t_{k+1})$ of infinitesimal perturbations to the predicting trajectory, $\Psi(m_{k-1},h)$ at time $t_k$.  This is only one sensible option.  One could alternatively consider the directions corresponding to the greatest growth over the interval
$(t_{k-1},t_{k})$, or over the whole interval $(t_{k-1},t_{k+1})$.
Investigation of these alternatives is beyond the scope of this work and is therefore deferred to later investigation.
}

\end{remark}

We make a small shift of notation and now consider
the observation operator $H_k$ as a linear mapping from $\bbR^J$ into $\bbR^M$,
rather than as a linear operator from $\bbR^J$ into itself, with rank $M$;
the latter perspective was advantageous for the presentation of the analysis,
but differs from the former which is sometimes computationally advantageous and 
more widely used for the description of algorithms. Recall the minimization principle \eqref{eq:min}, noting
that now the first norm is in $\bbR^J$ and the second in $\bbR^M$.

\subsection{3DVAR}
\label{ssec:3}

Here we consider the minimization principle \eqref{eq:min} with the choice
{$\widehat{C}_{k+1}=C_0 \in \bbR^{J \times J}$, a strictly positive-definite matrix,
for all $k$. Assuming that $\Gamma \in \bbR^{M \times M}$ is also
strictly positive-definite, the filter may be written as\as{
\begin{subequations}
\label{equations}
\begin{align}
  \label{eq:f}
  m_{k+1} &= \Psi(m_{k})+G_{k+1}\Big(y_{k+1}-H_{k+1}\Psi(m_{k})\Big)\\
  \label{eq:g}
  G_{k+1} &= C_0H_{k+1}^T(H_{k+1}C_0H_{k+1}^T+\Gamma)^{-1}.
\end{align}
\end{subequations}
}
As well as using the choice of $H_k$ defined in \eqref{eq:hk},
we also employ the fixed observation operator where $H_k=H$, including the
choice $H=P$ given by \eqref{P}. In the last case $J=3J'$, 
$M=2J'$ and $P$ is realized as a $2J'\times 3J'$ matrix.

We make the 
choices $C_0=\sigma^2 I_{J \times J},\, \Gamma=\epsilon^2 I_{M \times M}$ 
and define $\eta=\epsilon^2/\sigma^2.$ Throughout our experiments we take $h=0.1$, 
$\epsilon^2=0.01$ and fix the parameter $\eta= 0.01$ (i.e. $\sigma=1$). 
We use the Lorenz '96 model \eqref{1} to define $\Psi$, with the parameter choices
$F=8$ and $J=60$. The system then has $19$ positive Lyapunov exponents which we calculate by the methods described in \cite{PhysRevA.14.2338}. 
The 
observational noise is i.i.d Gaussian with respect to time index $k$,
with distribution $\nu_1 \sim N(0,\epsilon^2)$.

Throughout the following we show (approximation) to the expected
value, with respect to noise realizations around a single fixed 
true signal solving \eqref{eq:en},
of the error between the filter and the signal underlying
the data, in the Euclidean norm, as a function of time. We also
quote numbers which are found by time-averaging this quantity.
The expectation is approximated by a Monte Carlo method
in which $I$ realizations of the noise in the data are created, leading
to filters $m_k^{(i)}$, with $k$ denoting time and $i$ denoting
realization. Thus we have, for $t_k=kh$,
$${\rm RMSE}(t_k)=\frac{1}{I}\underset{i=1}{\overset{I}{\sum}} \sqrt{\frac{{\Vert m^{(i)}_k-v_k\Vert}^2}{J}}.$$
This quantity is graphed, as a function of $k$, in what follows.
Notice that similar results are obtained if only one realization is used
($I=1$) but they are more noisy and hence the trends underlying
them are not so clear. We take $I=10^4$ throughout the reported
numerical results. When we state a number for the ${\rm RMSE}$
this will be found by time-averaging after ignoring the initial transients ($t_k<40$):

$$ {\rm RMSE}={\underset{t_k > 40}{\rm mean} } \{{\rm RMSE}(t_k)\}. $$
In what follows we will simply refer to \RMSE; from the
context it will be clear whether we are talking about the
function of time, ${\rm RMSE}(t_k)$, or the time-averaged number
${\rm RMSE}$. 

Figures 
\ref{fig:fixed40} and \ref{fig:two} exhibit, for fixed observation 3DVAR 
and adaptive observation 3DVAR, the \RMSE as a function of time. 
The Figure \ref{fig:fixed40} shows the \RMSE for fixed observation 
operator where the observed space is of dimension $60$ (complete
observations), $40$ (observation operator defined as in the equation \eqref{P}), \as{$36$}
and $24$ respectively.
For \as{values $M=60$, $40$ and $36$} the error decreases rapidly and the approximate solution 
converges to a neighbourhood of the true solution where the size of the neighbourhood 
depends upon the variance of the observational noise. \as{For the cases $M=60$ and $M=40$ we use the identity operator $I_{J\times J}$ and the projection operator $P$ as defined in the equation \eqref{P} as the observation operators respectively. The observation operator for the case $M=36$ can be given as 
\begin{equation} P_{36}=\left( \begin{array}{c}\label{P1}
e_1,\, e_2,\,  0,\,  e_4,\,  0,\,  e_6, e_7,\, 0,\,  e_9,\,  0, \, e_{11},\,  e_{12},\,  0,\,  e_{14}, \ldots 
\end{array}\right)_{J\times J}\end{equation} where we observe $3$ out of $5$ directions periodically. The \RMSE, averaged over the 
trajectory, after ignoring the initial transients, is $1.30\times 10^{-2}$ when $M=60$,  $1.14\times 10^{-2}$ when $M=40$ and \as{$1.90\times 10^{-2}$ when $M=36;$} note that this is on the scale of the observational noise. 
{The rate of convergence of the approximate solution to the true solution in the case of partial observations is lower than the rate of convergence when full observations are used. However, despite this, the RMSE itself is lower in the case when $M=40$ than in the case of full observations. We conjecture that this is because there is, overall, less noise injected into the system when $M=40$ in comparison to the case when all directions are observed.
} \as{The convergence of the approximate solution to the true solution for the case when $M=36$ shows that the value $M=40$, for which theoretical results have been presented in section \ref{sec:C}, is not required for small error (${\cal O}(\epsilon)$) consistently over the trajectory.}}
We also consider the case when $24=40\%$ of the modes are observed \as{using
the following observation operator: 
\begin{equation} P_{24}=\left( \begin{array}{c}\label{P2}
e_1,\, 0,\,  0,\,  e_4,\,  0,\,  0, e_7,\, 0,\,  0,\,  e_{10}, \, e_{11},\, 0,\,  0,\,  e_{14}, \ldots 
\end{array}\right)_{J\times J}.\end{equation}
Thus we observe $4$ out of $10$ directions periodically;} this \as{structure}
is motivated by the work reported in \cite{hdia13,kostuk12} where it was demonstrated that
observing $40\%$ of the modes, with the observation directions chosen carefully and with observations sufficiently frequent in time, is 
sufficient for the approximate solution to converge to the true underlying solution. The Figure \ref{fig:fixed40} shows that, in our observational set-up, observing $24$
of the modes only allows marginally successful reconstruction of
the signal, asymptotically in time;
the \RMSE makes regular large excursions 
and the
time-averaged RMSE over the trajectory is $(5.73\times 10^{-2})$, which is 
an order of magnitude larger than for $36$, $40$ or $60$ observations.


\begin{figure}[H]

        \centering
\includegraphics[width=17cm, height=8cm]{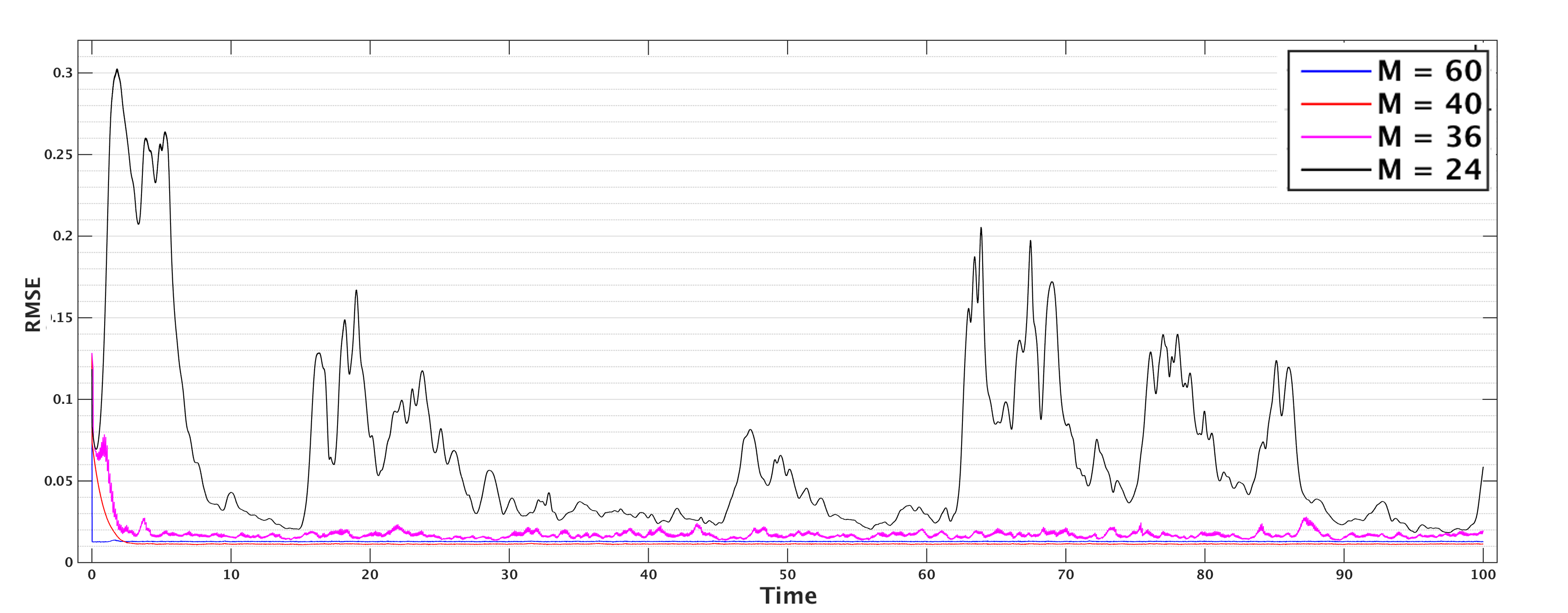}
                \caption{Fixed Observation Operator 3DVAR. Comparison with the case  when $M=24$. RMSE  value averaged over the trajectory  for $M=24$ is $5.73\times 10^{-2}$. }
                \label{fig:fixed40}
   
 \end{figure}     

Figure \ref{fig:two} shows the \RMSE for adaptive observation 3DVAR. 
In this case we notice that the error is consistently
small, uniformly in time, with just $9$ or more modes observed. 
When $M=9$ ($15\%$ observed modes) the \RMSE averaged over the trajectory is $1.35\times 10^{-2}$ which again is of the order of the observational noise variance.
For $M \ge 9$ the error is similar -- see Figure \ref{fig:tone}.
On the other hand, for smaller values of $M$ the error is not controlled as
shown in Figure \ref{fig:ttwo} where the \RMSE for $M=7$ is compared
with that for $M=9;$ for $M=7$ it is an order of magnitude larger than
for $M=9$. It is noteworthy that the number of observations necessary and
sufficient for accurate reconstruction is approximately half the number 
of positive Lyapunov exponents.

\begin{figure}
	
		        \begin{subfigure}[b]{\textwidth}
                \includegraphics[width=\textwidth]{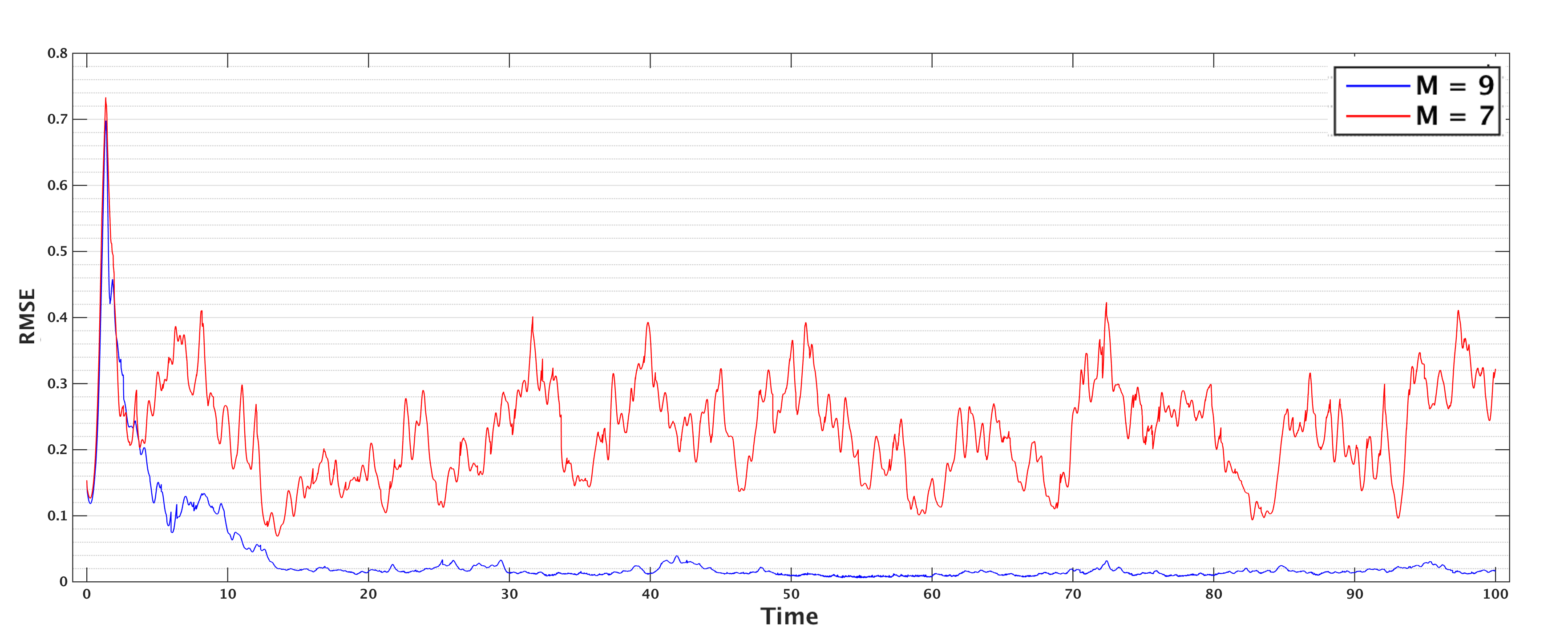}
                \caption{Comparison of RMSE between M = 7 and M = 9. RMSE  values averaged over trajectory are $2.25\times 10^{-1}$, $1.35\times 10^{-2}$ respectively.}
                \label{fig:ttwo}
                \end{subfigure}
        \begin{subfigure}[b]{\textwidth}
                \includegraphics[scale=0.80]{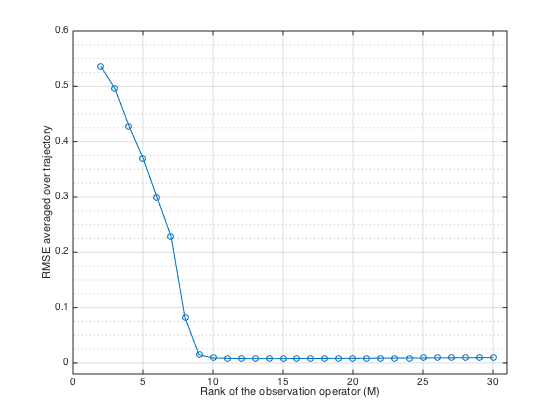}
                \caption{Averaged RMSE for different choices of $M$}
                \label{fig:tone}
        \end{subfigure}        ~ 
                ~ 
           
        \caption{Adaptive Observation 3DVAR}\label{fig:two}
\end{figure}   
\subsection{Extended Kalman Filter}
\label{ssec:EX}

In the Extended Kalman Filter (ExKF) the approximate solution evolves according to 
the minimization principle \eqref{eq:min} with $C_k$ chosen as a covariance matrix
evolving in the forecast step according to the linearized dynamics, and in
the assimilation stage updated according to Bayes' rule based on a Gaussian
observational error covariance. This gives the method
\begin{eqnarray*}
m_{k+1} &=& \Psi(m_{k})+G_{k+1}\Big(y_{k+1}-H_{k+1}\Psi({m}_{k})\Big),\\
\widehat{C}_{k+1} &=& D\Psi(m_k) C_{k}D\Psi(m_k)^T,\\
C_{k+1}&=&(I_{J\times J}-G_{k+1}H_{k+1})\widehat{C}_{k+1},\\
G_{k+1} &=& \widehat{C}_{k+1}H_{k+1}^T(H_{k+1}\widehat{C}_{k+1}H_{k+1}^T+\Gamma)^{-1}.
\end{eqnarray*}

We first consider the ExKF scheme with a fixed observation operator $H_{k}=H.$
We make two choices for $H$: the full rank identity operator and a partial 
observation operator given by \eqref{P2} so that $40\%$ of the modes are 
observed. For the first case the filtering scheme is the standard ExKF with all the modes being observed. The approximate solution converges to the true solution and the error decreases rapidly as can be seen in the Figure \ref{fig:ekf60}. The \RMSE is $9.49\times 10^{-4}$ 
which is an order of magnitude smaller than the analogous error for the
3DVAR algorithm when fully observed which is, recall, $1.30\times 10^{-2}$.
For the partial observations case with $M=24$ we see that again the approximate solution converges to the true underlying solution as shown in the Figure \ref{fig:ekf24}. 
Furthermore the solution given by the ExKF with $M=24$ is far more robust than
for 3DVAR with this number of observations. The \RMSE is 
also lower 
for ExKF $(2.68\times 10^{-3})$ when compared with 
the 3DVAR scheme $(5.73\times 10^{-2})$. 

\begin{figure}
        \begin{subfigure}[b]{\textwidth}
                \includegraphics[width=\textwidth]{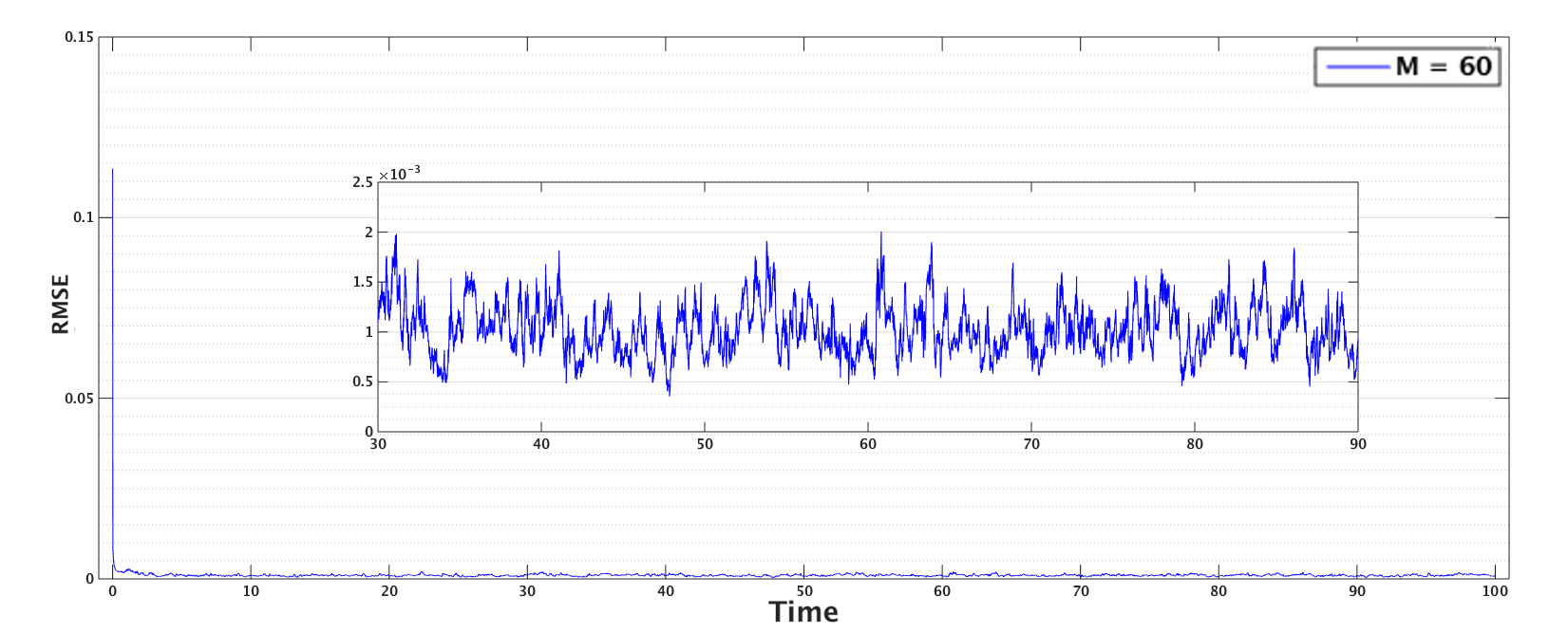}
                \caption{Percentage of components observed = $100\%$. RMSE  value averaged over trajectory $9.49\times 10^{-4}. $}
                \label{fig:ekf60}
        \end{subfigure}%
        ~           \\
        \begin{subfigure}[b]{\textwidth}
                \includegraphics[width=\textwidth]{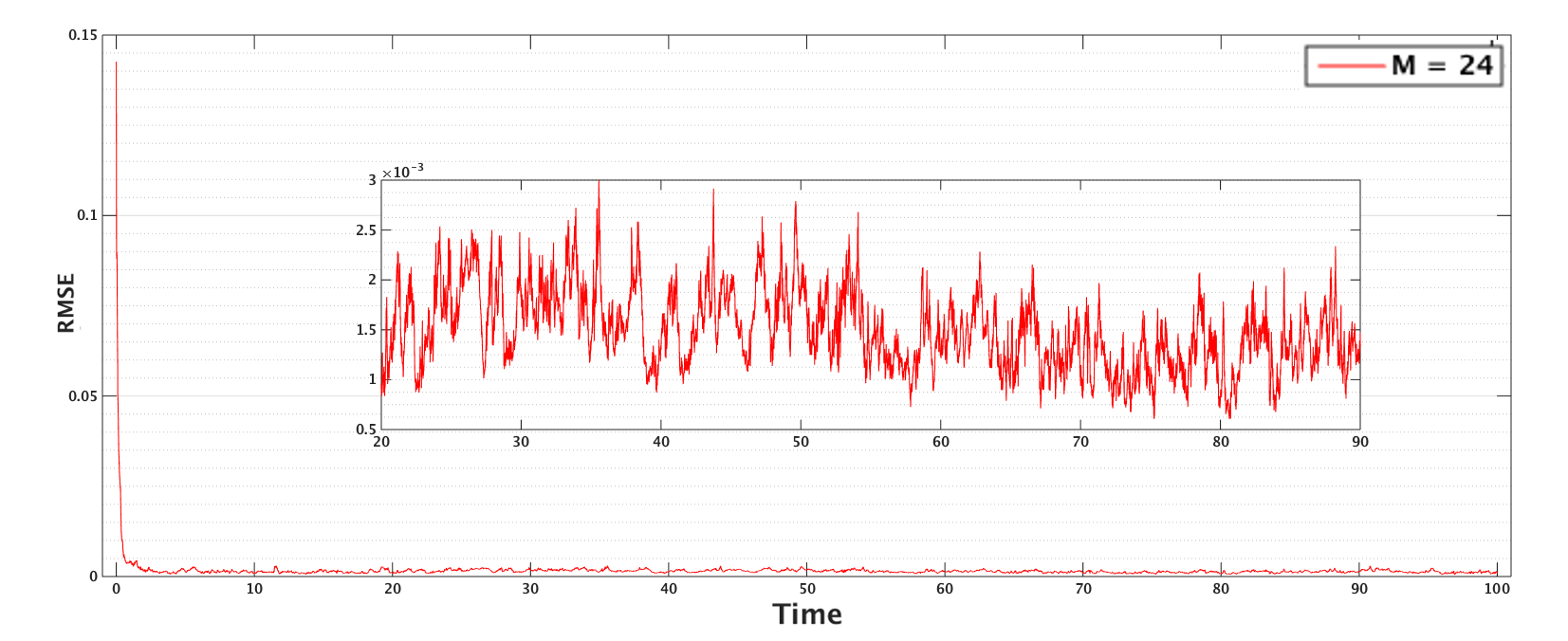}
                \caption{Percentage of components observed = $40\%$. RMSE  value averaged over trajectory $1.39\times 10^{-3}. $}
                \label{fig:ekf24}
        \end{subfigure}
        ~            
        \caption{Fixed Observation ExKF. The zoomed in figures shows the variability in RMSE between time $t=20$ and $t=90$.}\label{fig:ekffo}
\end{figure}   

We now turn to adaptive observation within the context of the ExKF.
The Figure \ref{fig:ekfad} shows that it is possible to obtain an \RMSE
which is of the order of the observational error, and is robust
over long time intervals, {using only a $6$ dimensional observation space},
improving marginally on the 3DVAR situation where $9$ dimensions were required to attain a similar level of accuracy.

\begin{figure}

			 \begin{subfigure}[b]{\textwidth}
                \includegraphics[width=\textwidth]{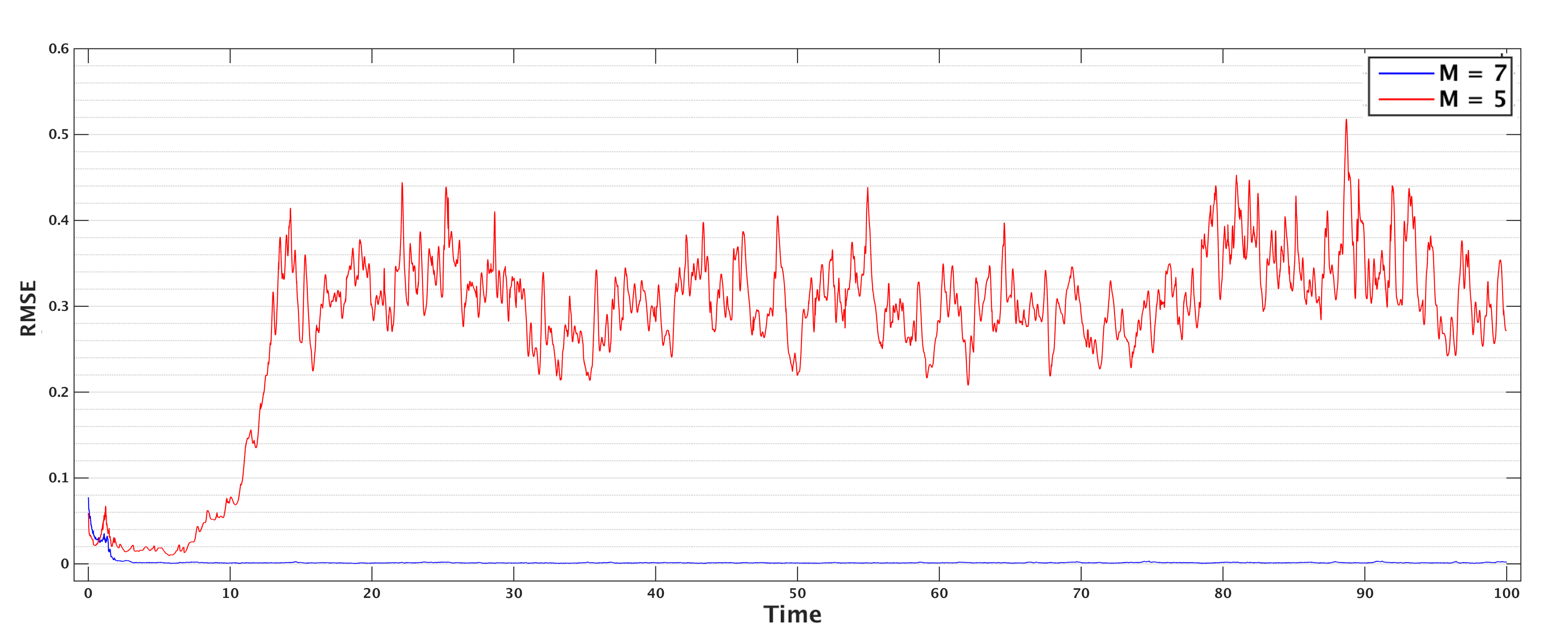}
                \caption{Comparison of RMSE between {M = 5 and M = 7}. RMSE  values averaged over trajectory are $2.84\times 10^{-1}$, $1.31\times 10^{-3}$ respectively.}
                \label{fig:ekf7}
        \end{subfigure}
        \begin{subfigure}[b]{\textwidth}
                \includegraphics[scale=0.70]{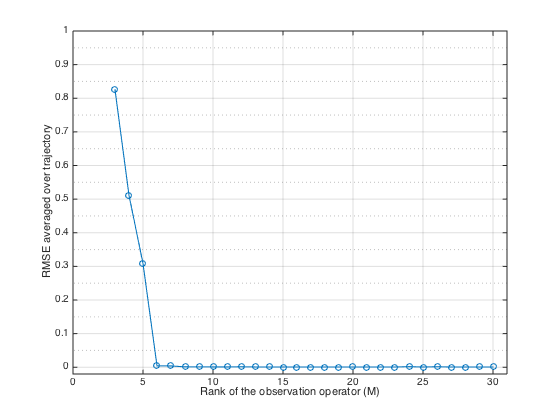}
                \caption{Averaged RMSE 
                for different choices of $M$.}
                \label{fig:ekfm}
        \end{subfigure}%
        ~           \\
       
        ~            
        \caption{Adaptive Observation ExKF}\label{fig:ekfad}
\end{figure}

\rev{The AUS scheme, proposed by Trevisan and co-workers \cite{trevisan04, trevisan_npg2011}, is an ExKF
method which operates by confining the analysis update to a subspace designed to capture the instabilities in the dynamics. 
This subspace is typically chosen as the span of the $M$ 
largest growth directions, where $M$ is  the precomputed number of non-negative Lyapunov exponents.
To estimate the unstable subspace one starts with $M$ orthogonal perturbation vectors and propagates them forward under the linearized dynamics in the forecast step to obtain a forecast  covariance matrix $(\hat{C_k}).$ The perturbation vectors for the next assimilation cycle are 
provided by 
the square root of the covariance matrix $(C_k)$ which can be computed via a suitable $M\times M$
transformation as shown in equations (11)-(15) of \cite{trevisan_npg2011}. 
Under the assumption that the observational noise is sufficiently small that the truth of the exact model 
is close to the estimated mean and the discontinuity of the update is not too significant, it can be argued
that the unstable subspace generated by the dominant Lyapunov vectors is preserved through the assimilation cycle.  This has been illustrated numerically in \cite{trevisan_npg2011} and references therein.  That work also observes the phenomenon of reduced error in the AUS scheme as compared to the full assimilation, due to corruption by observational noise in stable directions in the latter case. }
Asymptotically this method with $H=I_{J \times J}$ behaves similarly
to the adaptive ExKF with {observation operator  of rank} $M$. 
To understand the intuition behind the AUS method we plot in Figure \ref{fig:ekfrank}
the rank (computed by truncation to zero of eigenvalues below a threshold)
of the covariance matrix $C_k$ from standard ExKF based on observing
$60$ and $24$ modes. Notice that in both cases the rank approaches a value of
$19$ or $20$ and that $19$ is the number of non-negative Lyapunov exponents.
This means that the covariance is effectively zero in $40$ of the observed dimensions
and that, as a consequence of the minimization principle \eqref{eq:min}, data
will be ignored in the $40$ dimensions where the covariance is negligible.
It is hence natural to simply confine the update step to the subspace of
dimension $19$ given by the number of positive Lyapunov exponents, right from
the outset. This is exactly what AUS does by reducing the rank of the error covariance matrix $C_k$. 
Numerical results are given in
Figure \ref{fig:aus} which shows the \RMSE 
over the trajectory for the ExKF-AUS assimilation scheme {versus time for the observation operator $H=I_{J \times J}$}. After 
initial transients the error is mostly of the numerical order of the observational noise. Occasional jumps outside this error bound are observed but the approximate solution converges to the true solution each time. The \RMSE for ExKF-AUS is $1.49\times 10^{-2}$. However, if the rank of the error covariance matrix $C_0$ in AUS
is chosen to be less than the number of unstable modes for the underlying system, 
then the  approximate solution does not converge to the true solution. 
\begin{figure}

  \begin{subfigure}[b]{\textwidth}
                \includegraphics[scale=0.60]{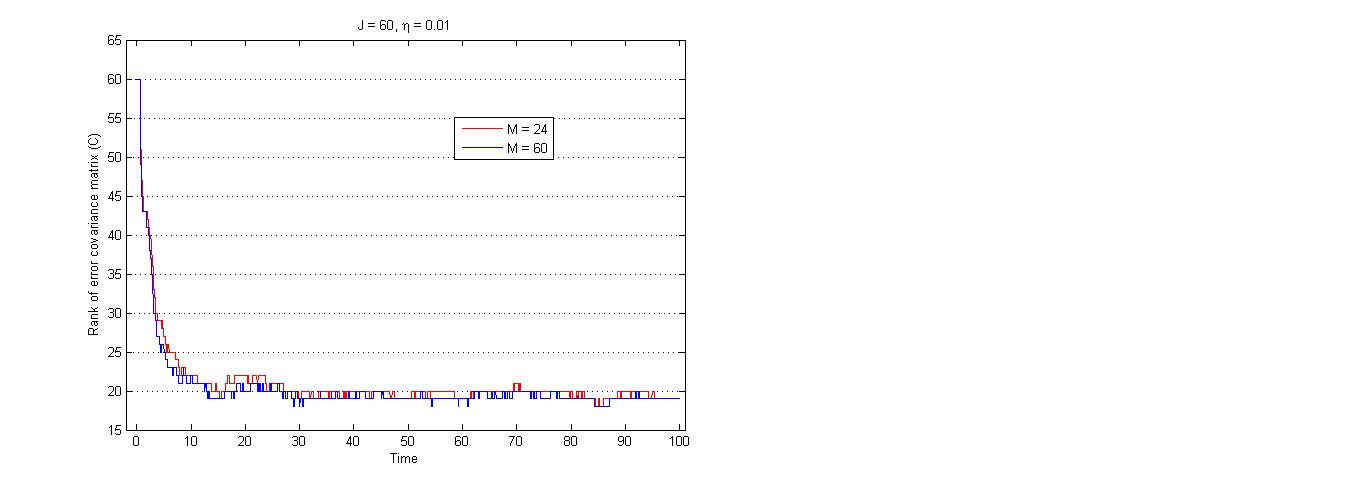}
                \caption{Standard ExKF with $60$ and $24$ observed modes. The rank of the error covariance matrix $C_k$ decays to (approximately) the number of unstable Lyapunov modes in the underlying system, namely $19$. }
                \label{fig:ekfrank}
        \end{subfigure}%
        ~           \\

        \begin{subfigure}[b]{\textwidth}
                \includegraphics[width=\textwidth]{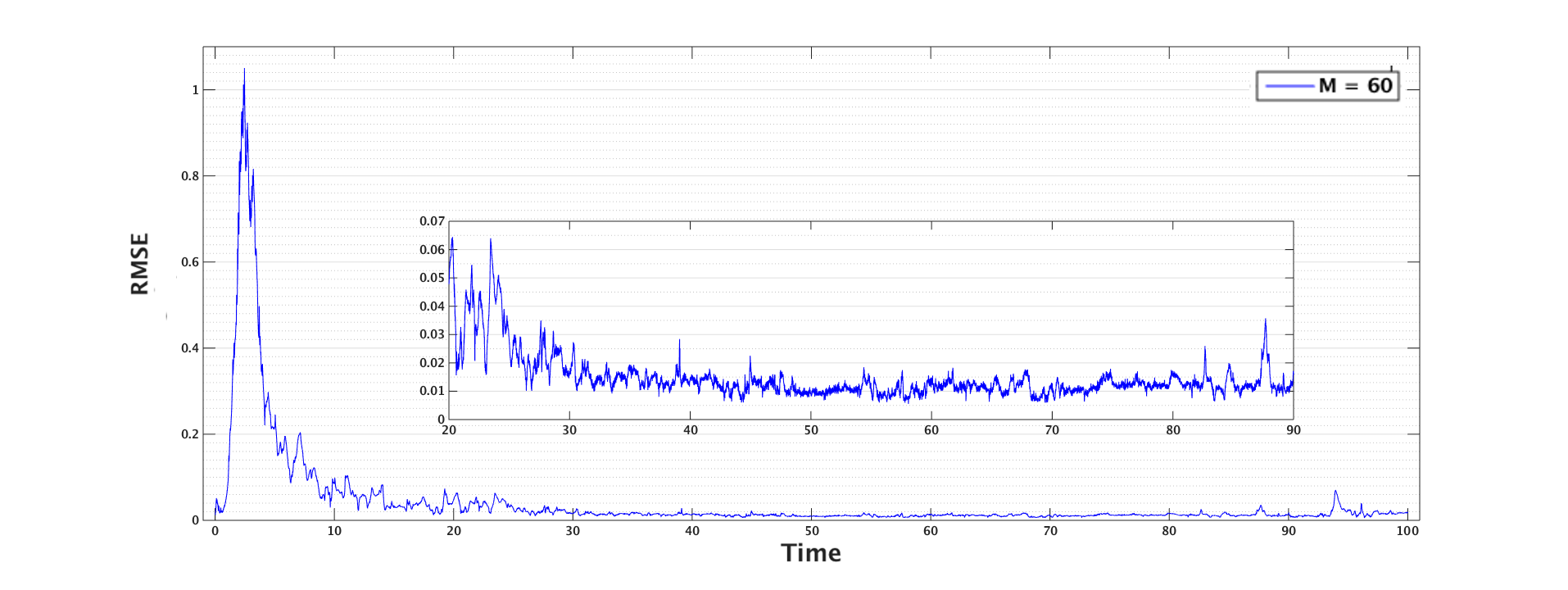}
                \caption{ RMSE  value averaged over trajectory: $1.49\times 10^{-2}. $ The zoomed in figures shows the variability in RMSE between time $t=20$ and $t=90$.  The rank of observation operator is chosen $M=60$.}
                \label{fig:aus}
        \end{subfigure}
        ~         \caption{Rank of error covariance and ExKF-Assimilation in Unstable Space}\label{fig:ekfaus}
\end{figure} 

\section{Conclusions}
\label{sec:CO}

In this paper we have studied the long-time behaviour of filters for
partially observed dissipative dynamical systems, using the Lorenz '96
model as a canonical example. We have highlighted the connection to
synchronization in dynamical systems, and shown that this synchronization 
theory, which applies to noise-free data, is robust to the addition of
noise, in both the continuous and discrete time settings. 
In so doing we are studying the 3DVAR algorithm. In the context of the Lorenz '96 model we have identified a fixed observation
operator, based on observing 2/3 of the components of the signal's vector,
which is sufficient to ensure desirable long-time properties of the
filter. However it is to be expected
that, within the context of fixed observation operators, 
considerably fewer observations may be needed to ensure such desirable properties.  
Ideas from nonlinear control theory will be relevant in addressing this issue. We also studied adaptive observation operators, targeted to observe the directions of maximal growth within the local linearized dynamics. We demonstrated that with these adaptive observers, considerably fewer observations are required. We also made a connection between
these adaptive observation operators, and the AUS methodology which
is also based on the local linearized dynamics, but works by
projecting within the model covariance operators of ExKF, whilst
the observation operators themselves are fixed; thus the model covariances
are adapted. Both adaptive observation operators and the AUS methodology
show the potential for considerable computational savings in filtering,
without loss of accuracy.

In conclusion our work highlights the role of ideas from dynamical systems
in the rigorous analysis of filtering schemes and, through computational
studies, shows the gap between theory and practice, demonstrating
the need for further theoretical developments. {We emphasize that
the adaptive observation operator methods may not be implementable
in practice on the high dimensional systems arising in, for example,
meteorological applications.} However, they provide conceptual insights into the
development of improved algorithms and it is hence important to understand
their properties.

\noindent {\bf Acknowledgements.  }{
AbS and DSA are supported by the EPSRC-MASDOC graduate training scheme.
AMS is supported by EPSRC, ERC and ONR.
KJHL is supported by King Abdullah University of Science and Technology, 
and is a member of the KAUST SRI-UQ Center.}

\vspace{0.5in}
\bibliographystyle{plain}
\bibliography{mybibfile}

\section*{Appendix: Proofs}

\begin{proof}[Proof of Properties \ref{p:2.1}]
Properties \ref{itm:a}, \ref{itm:b} and \ref{itm:c} are straightforward and we omit the proofs. 
We start showing \ref{itm:d}. 
For any $u\in\mathbb{R}^J$ set
$$\|u\|_{\infty}=\underset{1\le j\le J}{\max}|u^{(j)}|$$ and recall that  $|u|^2\geq \|u\|_{\infty}^2.$ Then, for $u,\tilde{u}\in \mathbb{R}^J$, 
 and for $1\le j \le J$, we have that
\begin{eqnarray*}
2|B(u, \tilde{u})^{(j)}| &\le & \|u\|_{\infty}(|\tilde{u}^{(j+1)}|+|\tilde{u}^{(j-2)}|)+\|\tilde{u}\|_{\infty}(|u^{(j+1)}|+|u^{(j-2)}|),
\end{eqnarray*}
and so
\begin{eqnarray*}
4|B(u, \tilde{u})|^2 &\le & 2\|u\|^2_{\infty}\overset{J}{\underset{j=1}{\sum}}(|\tilde{u}^{(j+1)}|+|\tilde{u}^{(j-2)}|)^2+2\|\tilde{u}\|^2_{\infty}\overset{J}{\underset{j=1}{\sum}}(|u^{(j+1)}|+|u^{(j-2)}|)^2\notag\\
&\le & 8\|u\|^2_{\infty}|\tilde{u}|^2+8\|\tilde{u}\|^2_{\infty}|u|^2\notag\\
&\le & 16|u|^2|\tilde{u}|^2.
\end{eqnarray*}
Hence
\begin{equation*}
|B(u, \tilde{u})|\le 2|u||\tilde{u}|.
\end{equation*}
For \ref{itm:f} we use rearrangement and periodicity of indices under summation as follows:
\begin{eqnarray*}
2\langle B(u,\tilde{u}),u\rangle &=&\overset{J}{\underset{j=1}{\sum}}\Big(u^{(j)}(u^{(j-1)}\tilde{u}^{(j+1)}+\tilde{u}^{(j-1)}u^{(j+1)}-\tilde{u}^{(j-1)}u^{(j-2)}-u^{(j-1)}\tilde{u}^{(j-2)})\Big)\notag\\
&=&\overset{J}{\underset{j=1}{\sum}}(u^{(j)}u^{(j-1)}\tilde{u}^{(j+1)}-u^{(j)}\tilde{u}^{(j-1)}u^{(j-2)})\notag\\
&=&\overset{J}{\underset{j=1}{\sum}}(u^{(j-1)}u^{(j-2)}\tilde{u}^{(j)}-u^{(j+1)}\tilde{u}^{(j)}u^{(j-1)})\notag\\
&=&\overset{J}{\underset{j=1}{\sum}}\Big(\tilde{u}^{(j)}(u^{(j-1)}u^{(j-2)}-u^{(j+1)}u^{(j-1)})\Big)\notag\\
&=&-\langle B(u,u),\tilde{u}\rangle.
\end{eqnarray*}
\end{proof}

\begin{proof}[Proof of Proposition \ref{absorbing ball}]
Taking the Euclidean inner product of $u(t)$ with equation (\ref{0}) and using properties \ref{itm:a} and \ref{itm:b} we get
\begin{equation*}
\frac{1}{2}\frac{\textrm{d}|u|^2}{\textrm{d}t}=-|u|^2+\langle f,u\rangle.
\end{equation*}
Using Young's inequality for the last term gives
\begin{equation*}
\frac{\textrm{d}|u|^2}{\textrm{d}t}+|u|^2\leq JF^2.
\end{equation*}
Therefore, using Gronwall's lemma,
\begin{equation*}
|u(t)|^2\leq|u_0|^2e^{-t}+JF^2(1-e^{-t}),
\end{equation*}
and the result follows.
\end{proof}

\begin{proof}[Proof of Property \ref{p:2.2}]
The first part is automatic since, if $q:=Qu$, then  for all $j$ either $q^{(j-1)}=0$ or
$q^{(j-2)}=q^{(j+1)}=0$. Since $B(Qu,Qu)=0$ and
$B(\cdot,\cdot)$ is a bilinear operator we can write
\begin{eqnarray*}
B(u,u)&=&B(Pu+Qu,Pu+Qu)\notag\\
&=&B(Pu,Pu)+2B(Pu,Qu).
\end{eqnarray*}
Now using property \ref{itm:d}, and the fact that there is $c>0$ such that $|Pu|+2|Qu|\leq \frac{c}{2}|u|,$ 
\begin{eqnarray*}
|\langle B(u,u),\tilde{u}\rangle |&\leq & |B(u,u)||\tilde{u}|\notag\\
&\leq & |B(Pu,Pu)+2B(Pu,Qu)||\tilde{u}|\notag\\
&\leq & 2|Pu||\tilde{u}|(|Pu|+2|Qu|)\notag\\
&\leq & c|Pu||\tilde{u}||u|.
\end{eqnarray*}
\end{proof}

\begin{proof}[Proof of Theorem \ref{t:54}]
Define the error in the approximate solution as $\delta = m-v  = q - Qv$. Note that $Q\delta=\delta$. 
The error satisfies the following equation 
\begin{equation*}
Q\frac{\textrm{d}\delta}{\textrm{d}t}+Q\delta+Q\big(B(Pv+q,Pv+q)-B(v,v)\big)=0.
\end{equation*}
Splitting $v=Pv+Qv$ and noting, from Properties \ref{p:2.2}, 
that $B(Qv,Qv)=0$ and $B(q,q)=0$, yields
\begin{equation*}
\frac{\textrm{d}Q\delta}{\textrm{d}t}+Q\delta+2QB(Pv,Q\delta)=0.
\end{equation*}
Taking the inner product with $Q\delta$ gives
\begin{equation*}
\frac{1}{2}\frac{\textrm{d}|Q\delta|^2}{\textrm{d}t}+|Q\delta|^2+2\langle B(Pv,Q\delta),Q\delta\rangle=0.
\end{equation*}
Note that from the Properties \ref{p:2.1}, \ref{itm:c} and \ref{itm:f}, and
Property \ref{p:2.2}, we have
\begin{eqnarray*}
2\langle B(u,Q\delta),Q\delta\rangle &=& - \langle B(Q\delta,Q\delta),u\rangle\notag\\
&=& 0.
\end{eqnarray*}
Thus since $Q\delta =\delta$ we have 
\begin{equation*}
\frac{\textrm{d}|\delta|^2}{\textrm{d}t}+2|\delta|^2 = 0,
\end{equation*}
and so
\begin{equation*}
|\delta(t)|^2 = |\delta(0)|^2e^{-2t}.
\end{equation*}
As $t\rightarrow\infty$ the error $\delta(t)\rightarrow 0$.
\end{proof}

\begin{proof}[Proof of Theorem \ref{t:55}]
 From \eqref{eq:m} and \eqref{eq:z} 
\begin{eqnarray*}
\frac{\textrm{d}m}{\textrm{d}t}=\mathcal{F}(m)+\frac{1}{\eta}\Big(Pv+\epsilon P\frac{\textrm{d}w}{\textrm{d}t}-Pm \Big).
\end{eqnarray*}
Thus
\begin{equation*}
\frac{\textrm{d}m}{\textrm{d}t}=-m-B(m,m)+f+\frac{1}{\eta}P(v-m)+\frac{\epsilon}{\eta}P\frac{\textrm{d}w}{\textrm{d}t}.
\end{equation*}
The signal is given by
\begin{equation*}
\frac{\textrm{d}v}{\textrm{d}t}=-v-B(v,v)+f, 
\end{equation*}
and so the error $\delta = m-v$ satisfies 
\begin{equation*}
\frac{\textrm{d}\delta}{\textrm{d}t}=-\delta-2B(v,\delta)-B(\delta,\delta)-\frac{1}{\eta}P\delta+ \frac{\epsilon}{\eta}P\frac{\textrm{d}w}{\textrm{d}t}.
\end{equation*}
Lemma \ref{l3} below, Properties \ref{p:2.1} and It\'o's formula give
\begin{equation*}
\frac{1}{2}\textrm{d}|\delta|^2+\Big(1-\frac{c^2K\eta}{4}\Big)|\delta|^2{\textrm{d}t}\leq \frac{\epsilon}{\eta}\langle P\textrm{d}w, \delta\rangle + \frac{J}{3}\frac{\epsilon^2}{\eta^2}{\textrm{d}t}.
\end{equation*}
Integrating and taking expectations
\begin{equation*}
\frac{\textrm{d}\mathbb{E}{|\delta|}^2}{\textrm{d}t}\le -\lambda\mathbb{E}{|\delta|}^2+\frac{2J\epsilon^2}{3\eta^{2}}.
\end{equation*}
Use of the Gronwall inequality gives the desired result.
\end{proof}

\vspace{0.1in}
We now turn to discrete-time data assimilation, where the following lemma plays an important role:
\begin{lemma}\label{lm1}
Consider the Lorenz '96 model \eqref{0} with $F>0$ and $J\geq3.$ Let $v$ and $u$ be two solutions in $[t_k,t_{k+1}),$ with $v(t_k)\in \mathcal{B}.$  Then there exists a $\beta \in \bbR$ such that
$$|u(t)-v(t)|^2\le |u(t_k)-v(t_k)|^2e^{\beta(t-t_k)} \quad t\in[t_k,t_{k+1}).$$ 
\end{lemma}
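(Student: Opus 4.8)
The plan is to treat this as a standard local Gronwall estimate on the error, using the energy-conserving and skew-symmetric structure of $B$ recorded in Properties \ref{p:2.1} so as to avoid any appeal to a bound on $u$. First I would set $e:=u-v$. Subtracting the two copies of \eqref{0} satisfied by $u$ and $v$, and using bilinearity together with the symmetry $B(u,v)=B(v,u)$ of Property \ref{itm:c} to write $B(u,u)-B(v,v)=2B(v,e)+B(e,e)$, the error solves $\frac{\textrm{d}e}{\textrm{d}t}+e+2B(v,e)+B(e,e)=0$ on $[t_k,t_{k+1})$.

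Next I would take the Euclidean inner product with $e$. The cubic contribution $\langle B(e,e),e\rangle$ vanishes by Property \ref{itm:b}, while the cross term is handled by the identity $2\langle B(v,e),e\rangle=2\langle B(e,v),e\rangle=-\langle B(e,e),v\rangle$, obtained from the symmetry of $B$ (Property \ref{itm:c}) followed by Property \ref{itm:f} applied with $e$ in the role of the repeated argument. This produces the energy balance $\frac12\frac{\textrm{d}}{\textrm{d}t}|e|^2+|e|^2=\langle B(e,e),v\rangle$.

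It then remains to close the estimate. I would bound the right-hand side by $|\langle B(e,e),v\rangle|\le|B(e,e)|\,|v|\le 2|e|^2|v|$ using Property \ref{itm:d}, and then invoke the forward-invariance of the absorbing ball in Proposition \ref{absorbing ball}: since $v(t_k)\in\mathcal{B}$ we have $|v(t)|^2\le K$, hence $|v(t)|\le\sqrt{K}$, for all $t\ge t_k$. This gives $\frac{\textrm{d}}{\textrm{d}t}|e|^2\le(4\sqrt{K}-2)|e|^2$, so that setting $\beta:=4\sqrt{K}-2$ and applying Gronwall's inequality on $[t_k,t)$ yields $|e(t)|^2\le|e(t_k)|^2 e^{\beta(t-t_k)}$, which is the claim. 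One could equally use Property \ref{p:2.2} in place of Property \ref{itm:d} to obtain the slightly sharper constant $\beta=2(c\sqrt{K}-1)$.

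The only conceptual point — and the reason the skew-symmetric structure is essential — is that $u$ is an \emph{arbitrary} second solution (in the applications it is the filter output $m$) and need not lie in $\mathcal{B}$, so no bound on $|u|$ is available. The identity in the second step is precisely what transfers the entire surviving nonlinear term onto the bounded trajectory $v$, for which $|v|\le\sqrt{K}$ is guaranteed by forward-invariance; without this cancellation one could not exhibit a finite $\beta$ that is independent of $u$. Everything else is routine, so I expect no genuine obstacle beyond correctly bookkeeping the bilinear identities.
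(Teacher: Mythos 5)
Your proof is correct and is essentially identical to the paper's: both derive the error equation, kill the cubic term via Property \ref{itm:b}, transfer the cross term onto the bounded trajectory $v$ using Properties \ref{itm:c} and \ref{itm:f}, bound it with Property \ref{itm:d} and $|v|\le\sqrt{K}$, and apply Gronwall to get $\beta=2(2K^{1/2}-1)$. (Your parenthetical claim that Property \ref{p:2.2} would give a \emph{sharper} constant is doubtful, since the constant $c$ there need not be smaller than $4$, but this aside does not affect the argument.)
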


\begin{proof}
Let $\delta=m-v.$ Then $\delta$ satisfies
\begin{equation}\label{4}
\frac{1}{2}\frac{\textrm{d}|\delta|^2}{\textrm{d}t}+|\delta|^2+2\langle B(v,\delta), \delta\rangle+\langle B(\delta,\delta), \delta\rangle=0
\end{equation}
so that, by Property \ref{p:2.1}, item \ref{itm:b},
\begin{equation*}
\frac{1}{2}\frac{\textrm{d}|\delta|^2}{\textrm{d}t}+|\delta|^2 - 2|\langle B(v,\delta), \delta\rangle| \le 0.
\end{equation*}
Using Properties \ref{p:2.1} {items} \ref{itm:d} and \ref{itm:f} gives
$|\langle B(v,\delta), \delta\rangle| \le  K^{\frac{1}{2}}|\delta|^2$, where $K$ is defined in Proposition \ref{absorbing ball}, so that
\begin{equation*}
\frac{1}{2}\frac{\textrm{d}|\delta|^2}{\textrm{d}t}\le (2K^{\frac{1}{2}}-1)|\delta|^2 .
\end{equation*}
Integrating the differential inequality gives
\begin{equation}\label{2}
|\delta(t)|^2\le |\delta(t_k)|^2e^{\beta(t-t_k)}.
\end{equation}
\end{proof}

Note if $F < \frac{1}{2\sqrt{2} J}$ 
then $\beta =  2(2K^{\frac{1}{2}}-1)<0$ and the subsequent analysis
may be significantly simplified. Thus we assume in what follows that
$F \ge \frac{1}{2\sqrt{2} J}$ so that $\beta \ge 0.$
Lemma \ref{lm1} gives an estimate on the growth of the error in the forecast step. Our aim now is to show that this growth can be controlled by observing $Pv$ discretely in time. It will be required that the time $h$ between observations
is sufficiently small.

To ease the notation we introduce three functions that will be used in the proofs of Theorems \ref{p:2.2} and \ref{thm:dis}. Namely we define, for $t>0,$
\begin{equation}\label{A1}
A_1(t):=\frac{16K}{\beta}(e^{\beta t}-1)+\frac{4R_0^2}{2\beta}(e^{2\beta t}-1),
\end{equation}
\begin{equation}\label{A2}
B_1(t):=\frac{16c^2K^2}{\beta}\left[\frac{e^{\beta t}-e^{-t}}{\beta + 1}-(1-e^{-t})\right] +e^{-t}+ \frac{4c^2KR_0^2}{2\beta}\left[\frac{e^{2\beta t}-e^{-t}}{2\beta + 1}-(1-e^{-t})\right],
\end{equation}
and
\begin{equation}\label{B1}
B_2(t):=c^2K\{1-e^{-t}\}.
\end{equation}
Here and in what follows 
$c$, $\beta$ and $K$ are as in Property \ref{p:2.2}, Lemma \ref{lm1} and Proposition \ref{absorbing ball}.
We will use two different norms in $\mathbb{R}^J$ to prove the theorems that follow. In each case, the constant $R_0>0$ above quantifies the size of the initial error, measured in the relevant  norm for the result at hand.

\begin{proof}[Proof of Theorem \ref{t:56}]
Define the error $\delta = m-v.$ Subtracting equation \eqref{eq:en} 
from equation \eqref{8} gives
\begin{subequations}
\label{3}
\begin{align}
&\frac{\textrm{d}\delta}{\textrm{d}t}+\delta+2B(v,\delta)+B(\delta,\delta)=0,\quad t\in(t_k,t_{k+1}),\\
&\delta(t_k)=Q\delta(t_k^-)
\end{align}
\end{subequations}
{where $\delta(t_{k+1}^-):=\lim_{t\uparrow t_{k+1}}\delta(t)$ as defined in section \ref{ssec:D1}}. 
Notice that $B_1(0)=1$ and $B_1'(0)=-1$, so that there is $h^*>0$ with the property that $B_1(h) \in (0,1)$ for all $h \in (0,h^*].$ Fix any such assimilation time $h$ and denote $\gamma=B_1(h)\in(0,1).$
Let $R_0:=|\delta_0|$. We show by induction that, for every $k,$ $|\delta_k|^2\le \gamma^k R_0^2.$
We suppose that it is true for $k$ and we prove it for $k+1.$

Taking the inner product of $P\delta$ with the equation \eqref{3} gives
\begin{equation*}
\frac{1}{2}\frac{\textrm{d}|P\delta|^2}{\textrm{d}t}+|P\delta|^2+2\langle B(v,\delta), P\delta\rangle+\langle B(\delta,\delta), P\delta\rangle=0
\end{equation*}
so that, by Property \ref{p:2.1}, item \ref{itm:d},
\begin{equation*}
\frac{1}{2}\frac{\textrm{d}|P\delta|^2}{\textrm{d}t}+|P\delta|^2\le4|v||\delta||P\delta|+2|\delta|^2|P\delta|.
\end{equation*}
By the inductive hypothesis we have
$|\delta_k|^2\le R_0^2$ since $\gamma\in(0,1)$. Shifting the time origin by
setting $\tau:=t-t_k$ and using Lemma \ref{lm1} gives 
\begin{align}\label{eq:pdel1}
\frac{1}{2}\frac{\textrm{d}|P\delta|^2}{\textrm{d}\tau}+|P\delta|^2&\le4K^{\frac{1}{2}}|\delta||P\delta|+2|\delta_k|e^{\frac{\beta \tau}{2}}|\delta||P\delta| \notag\\ 
&\le4K^{\frac{1}{2}}|\delta||P\delta|+2R_0e^{\frac{\beta \tau}{2}}|\delta||P\delta|.
\end{align}
Applying Young's inequality to each term on the right-hand side we obtain 
\begin{equation}\label{diffequationpdelta}
\frac{\textrm{d}|P\delta|^2}{\textrm{d}\tau}\le 16 K|\delta|^2+4 R_0^2e^{\beta \tau}|\delta|^2.
\end{equation}
Integrating from $0$ to $s$, where $s\in (0,h)$, gives
\begin{align}
|P\delta(s)|^2 &\le A_1(s)|\delta_k|^2. \label{5}
\end{align}

Now again consider the equation \eqref{4} using Property \ref{p:2.1} {item} \ref{itm:f}
to obtain
\begin{equation*}
\frac{1}{2}\frac{\textrm{d}|\delta|^2}{\textrm{d}\tau}+|\delta|^2 - |\langle B(\delta,\delta), v\rangle| \le 0.
\end{equation*}

Using Property \ref{p:2.2} and Young's inequality yields
\begin{eqnarray}\label{bounddelta}
\frac{1}{2}\frac{\textrm{d}|\delta|^2}{\textrm{d}\tau}+|\delta|^2 &\le & c|v||\delta ||P\delta | \notag\\
&\le & cK^{\frac{1}{2}}|\delta ||P\delta | \notag\\
&\le & \frac{|\delta |^2}{2}+\frac{c^2K}{2}|P\delta |^2.
\end{eqnarray}
Employing the bound \eqref{5} then gives
\begin{equation*}
\frac{\textrm{d}|\delta|^2}{\textrm{d}\tau}+|\delta |^2 \le \left(\frac{16c^2 K^2}{\beta} (e^{\beta \tau} -1) +\frac{ 4c^2K R_0^2}{2\beta}(e^{2\beta \tau}-1)\right)|\delta_k|^2.
\end{equation*}
Therefore, upon using Gronwall's lemma, 
\begin{equation*}
|\delta(s)|^2 \le B_1(s)|\delta_k|^2.
\end{equation*}
It follows that 
$$|\delta_{k+1}|^2\le\gamma|\delta_k|^2\le \gamma^{k+1}R_0^2,$$
and the induction (and hence the proof) is complete.
\end{proof}

\begin{proof}[Proof of Theorem \ref{thm:dis}]
We define the error process $\delta(t)$ as follows:
\begin{equation}\label{deldef}
 \delta(t)=\left\{ \begin{array}{c}
\delta_k:=m_k-v(t)\quad \textrm{if}\:\: t=t_k\\
\Psi(m_k,t-t_k)-v(t)\quad \textrm{if}\:\: t\in(t_{k},t_{k+1}).\\
\end{array}\right.
\end{equation}

Observe that $\delta$ is discontinuous at times
$t_k$ which are multiples of $h$, since $m_{k+1} \ne
\Psi(m_k;h).$ 
Subtracting \eqref{eq:ve} from \eqref{eq:me} we obtain
\begin{align}
{\delta_{k+1}}&=\delta(t_{k+1})=\left(\frac{\eta}{1+\eta}P+Q\right) \delta(t^-_{k+1})+\frac{1}{1+\eta}{\nu_{k+1}}, \label{89}\\
\rev{P\delta_{k+1}} &\rev{= \frac{\eta}{1+\eta}P \delta(t^-_{k+1})+\frac{1}{1+\eta}\nu_{k+1},} \label{pdelta}
\end{align}
{where $\delta(t_{k+1}^-):=\lim_{t\uparrow t_{k+1}}\delta(t)$ as defined 
above and in section \ref{ssec:D1}}.

Let $A_1(\cdot)$, $B_1(\cdot)$ and $B_2(\cdot)$ be as in (\ref{A1}, \ref{A2}, \ref{B1}), and set 
\begin{align*}
M_1(t):=&\frac{2\eta}{1+\eta}\sqrt{A_1(t)}+\sqrt{B_1(t)},\\
M_2(t):=&\frac{2\eta}{1+\eta}+\sqrt{B_2(t)}.
\end{align*}
Since $A_1(0)=0, B_1(0)=1,$  $B_2(0)=0$ and
$$\frac{d}{dt}\sqrt{B_1(t)}\Big|_{t=0}=-1/2<0$$ 
it is possible to find $h,\eta>0$ small such that 
$$M_2(h)<M_1(h)=:\alpha<1.$$
Let $R_0=\|\delta_0\|$.  We show by induction that for such $h$ and $\eta,$ and provided that $\epsilon$ is small enough so that
$$\alpha R_0+2\epsilon<R_0,$$
we have that $\|\delta_k\|\le R_0$ for all $k.$ Suppose for induction that it is true for $k.$ 
Then $|\delta_k|\le \|\delta_k\|\le R_0$ and we can apply (after shifting time as before)  Lemma \ref{lem:dis} below to obtain that
$$|P\delta(\rev{t_k + t}))|\le\sqrt{A_1(t)|\delta_k|^2+|P\delta_k|^2}\le \sqrt{A_1(t)}|\delta_k|+|P\delta_k|$$
and
$$|\delta(\rev{t_k + t})|\le\sqrt{B_1(t)|\delta_k|^2+B_2(t)|P\delta_k|^2}\le \sqrt{B_1(t)}|\delta_k|+\sqrt{B_2(t)}|P\delta_k|.$$
Therefore, \rev{combining \eqref{89} and \eqref{pdelta}, and then using the two previous inequalities, we obtain that}
\begin{align*}
|P\delta_{k+1}|+|\delta_{k+1}| &\rev{\le \frac{2\eta}{1+\eta}|P\delta(t^-_{k+1})| + |\delta(t^-_{k+1})| + \frac{2}{1+\eta}|\nu_{k+1}|     }\\
&\le \left(\frac{2\eta}{1+\eta}\sqrt{A_1(h)}+\sqrt{B_1(h)}\right)|\delta_k| + \left(\frac{2\eta}{1+\eta}+\sqrt{B_2(h)}\right)|P\delta_k|+2\epsilon\\
&=M_1(h)|\delta_k|+M_2(h)|P\delta_k|+2\epsilon.
\end{align*}

Since $M_2(h)<M_1(h)=\alpha$ we deduce that
$$
\|\delta_{k+1}\|\le \alpha\|\delta_k\| + 2\epsilon,
$$
which proves \eqref{eq:bnd}.
Furthermore, the induction is complete, since
$$\|\delta_{k+1}\|\le \alpha\|\delta_k\| + 2\epsilon\le \alpha R_0+2\epsilon\le R_0.$$
\end{proof}

\begin{lemma}\label{l3}
Let $v\in\mathcal{B}.$ Then, for any $\delta,$
\begin{equation*}
\langle \delta + 2B(v,\delta)+B(\delta,\delta)+\frac{1}{\eta}P\delta, \delta\rangle \geq \Big(1-\frac{c^2K\eta}{4}\Big)|\delta|^2.
\end{equation*}
\end{lemma}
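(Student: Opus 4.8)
The plan is to expand the inner product termwise and control each of the four resulting contributions. Writing
$$\langle \delta + 2B(v,\delta)+B(\delta,\delta)+\tfrac{1}{\eta}P\delta, \delta\rangle = |\delta|^2 + 2\langle B(v,\delta),\delta\rangle + \langle B(\delta,\delta),\delta\rangle + \tfrac{1}{\eta}\langle P\delta,\delta\rangle,$$
I would first discard the cubic term $\langle B(\delta,\delta),\delta\rangle$, which vanishes by the energy-conservation Property \ref{p:2.1}, item \ref{itm:b}. For the last term, since $P$ is a symmetric projection ($P=P^T=P^2$), I would rewrite $\langle P\delta,\delta\rangle = |P\delta|^2 \ge 0$, so this contribution is nonnegative and available to absorb an indefinite cross term later.

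The crux is the bilinear cross term $2\langle B(v,\delta),\delta\rangle$. Using the symmetry of $B$ (Property \ref{p:2.1}, item \ref{itm:c}) together with the rearrangement identity (item \ref{itm:f}) applied with $u=\delta$ and $\tilde u = v$, this term equals $-\langle B(\delta,\delta),v\rangle$. At that point Property \ref{p:2.2} applies directly with $u=\delta$, $\tilde u = v$ to give $|\langle B(\delta,\delta),v\rangle| \le c|\delta||v||P\delta|$, and invoking $v\in\mathcal{B}$, hence $|v|\le K^{1/2}$ (with $K$ as in Proposition \ref{absorbing ball}), yields the lower bound $2\langle B(v,\delta),\delta\rangle \ge -cK^{1/2}|\delta||P\delta|$. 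Collecting the surviving terms leaves
$$\langle \delta + 2B(v,\delta)+B(\delta,\delta)+\tfrac{1}{\eta}P\delta, \delta\rangle \ge |\delta|^2 - cK^{1/2}|\delta||P\delta| + \tfrac{1}{\eta}|P\delta|^2.$$

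The final step, and the one I expect to require the most care, is choosing the Young's-inequality weight so that the two $|P\delta|^2$ terms cancel exactly and the stated constant emerges. Applying $ab \le \tfrac{\eta}{4}a^2 + \tfrac{1}{\eta}b^2$ with $a = cK^{1/2}|\delta|$ and $b = |P\delta|$ bounds the cross term by $\tfrac{c^2K\eta}{4}|\delta|^2 + \tfrac{1}{\eta}|P\delta|^2$; substituting this, the positive $\tfrac{1}{\eta}|P\delta|^2$ term cancels precisely against the one produced by Young's inequality, and one is left with $\bigl(1 - \tfrac{c^2K\eta}{4}\bigr)|\delta|^2$, which is exactly the claimed inequality. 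No further estimates are needed, so the entire argument reduces to a single energy bound once the cross term has been rewritten through Property \ref{p:2.2}.
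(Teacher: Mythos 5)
Your proposal is correct and follows essentially the same route as the paper's proof: drop the cubic term via Property \ref{p:2.1} item \ref{itm:b}, convert the cross term to $-\langle B(\delta,\delta),v\rangle$ via items \ref{itm:c} and \ref{itm:f}, bound it with Property \ref{p:2.2} and $|v|\le K^{1/2}$, and apply Young's inequality tuned so that the $\frac{1}{\eta}|P\delta|^2$ terms cancel. Your choice of weight in Young's inequality is exactly the paper's choice $\theta=\frac{c^2K\eta}{2}$ in its $\frac{\theta}{2}a^2+\frac{1}{2\theta}b^2$ form, so the two arguments coincide.
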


\begin{proof}
Use of Property \ref{p:2.1}, items \ref{itm:c} and \ref{itm:f},
together with Property \ref{p:2.2}, shows that
\begin{eqnarray*}
\langle \delta + 2B(v,\delta)+B(\delta,\delta)+\frac{1}{\eta}P\delta, \delta\rangle &=& |\delta|^2+2\langle B(v,\delta),\delta\rangle + \langle B(\delta,\delta),\delta\rangle+\langle \frac{1}{\eta}P\delta, \delta\rangle\notag\\
&= & |\delta|^2-\langle B(\delta,\delta),v\rangle + \langle \frac{1}{\eta}P\delta, \delta\rangle\notag\\
&\geq & |\delta|^2-cK^{\frac{1}{2}}|\delta||P\delta| + \frac{1}{\eta}|P\delta|^2\notag\\
&\geq & |\delta|^2-\frac{\theta|\delta|^2}{2}-\frac{c^2K|P\delta|^2}{2\theta} + \frac{1}{\eta}|P\delta|^2.
\end{eqnarray*}
Now choosing $\theta = \frac{c^2K\eta}{2}$ establishes the claim.

\end{proof}

\vspace{0.1in}

\begin{lemma}\label{lem:dis}
In the setting of Theorem \ref{thm:dis}, for $t\in [0,h)$ and $R_0:=\|\delta_0\|$ we have 
\begin{equation}\label{boundpdelta}
|P\delta(t)|^2\le A_1(t)|\delta_0|^2 + |P\delta_0|^2
\end{equation}
and
\begin{equation}\label{bounddeltalemma}
|\delta(t)|^2 \le B_1(t)|\delta_0|^2 + B_2(t)|P\delta_0|^2,
\end{equation}
where the error $\delta$ is defined as in \eqref{deldef} and $A_1, B_1$ and 
$B_2$ are given by (\ref{A1}, \ref{A2}, \ref{B1}).  
\end{lemma}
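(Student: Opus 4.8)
The plan is to exploit that, on the open assimilation window, the error $\delta$ evolves under the \emph{same} continuous error dynamics used in the proof of Theorem~\ref{t:56}, and to run essentially the same two-stage energy estimate while now retaining the contribution of $|P\delta_0|^2$, which in the noise-free synchronization setting was forced to vanish by the projection reset but here is genuinely nonzero. Concretely, for $t\in(0,h)$ the error defined in \eqref{deldef} is $\delta(t)=\Psi(m_0,t)-v(t)$, where both $\Psi(m_0,\cdot)$ and $v$ solve \eqref{eq:en}; hence $\delta$ satisfies
\[
\frac{\textrm{d}\delta}{\textrm{d}t}+\delta+2B(v,\delta)+B(\delta,\delta)=0,\qquad \delta(0)=\delta_0,
\]
exactly as in \eqref{3} and \eqref{4}. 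I would first bound $|P\delta|^2$, then feed that bound into the estimate for $|\delta|^2$.

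First I would take the inner product of the error equation \eqref{3} with $P\delta$ and, using Property~\ref{p:2.1}, item~\ref{itm:d}, the bound $|v|^2\le K$ from Proposition~\ref{absorbing ball}, and Lemma~\ref{lm1} in the form $|\delta(t)|\le|\delta_0|e^{\beta t/2}\le R_0 e^{\beta t/2}$ (valid since $R_0=\|\delta_0\|=|\delta_0|+|P\delta_0|\ge|\delta_0|$), arrive after Young's inequality at $\frac{\textrm{d}}{\textrm{d}t}|P\delta|^2\le 16K|\delta|^2+4R_0^2 e^{\beta t}|\delta|^2$, the exact analogue of \eqref{diffequationpdelta}. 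Substituting once more $|\delta(t)|^2\le|\delta_0|^2 e^{\beta t}$ and integrating from $0$ to $t$ produces the two terms of $A_1$ in \eqref{A1} times $|\delta_0|^2$, together with the boundary contribution $|P\delta_0|^2$; this is precisely \eqref{boundpdelta}. The only difference from Theorem~\ref{t:56} is the surviving $|P\delta_0|^2$, which was zero there because the reset enforced $P\delta_k=0$.

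Next I would return to the energy identity \eqref{4}. Property~\ref{p:2.1}, items~\ref{itm:b}, \ref{itm:c} and \ref{itm:f}, collapse the cubic and quadratic terms to $-\langle B(\delta,\delta),v\rangle$, which Property~\ref{p:2.2} and $|v|^2\le K$ bound by $cK^{1/2}|\delta|\,|P\delta|$; Young's inequality then gives $\frac{\textrm{d}}{\textrm{d}t}|\delta|^2+|\delta|^2\le c^2K|P\delta|^2$. Inserting \eqref{boundpdelta}, multiplying by the integrating factor $e^{t}$ and integrating from $0$ to $t$, the constant term $c^2K|P\delta_0|^2$ integrates to $c^2K(1-e^{-t})|P\delta_0|^2=B_2(t)|P\delta_0|^2$, while $e^{-t}|\delta_0|^2$ together with $c^2K e^{-t}\int_0^t e^{\tau} A_1(\tau)\,\textrm{d}\tau$ assembles into $B_1(t)|\delta_0|^2$, yielding \eqref{bounddeltalemma}.

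The only genuinely delicate part is this last bookkeeping: one must check that $e^{-t}\int_0^t e^{\tau} A_1(\tau)\,\textrm{d}\tau$ reproduces precisely the two bracketed expressions in \eqref{A2}. This reduces to the elementary integrals $\int_0^t e^{(\beta+1)\tau}\,\textrm{d}\tau$ and $\int_0^t e^{(2\beta+1)\tau}\,\textrm{d}\tau$, minus $\int_0^t e^{\tau}\,\textrm{d}\tau$, whose values, after multiplication by $e^{-t}$, are $\frac{e^{\beta t}-e^{-t}}{\beta+1}-(1-e^{-t})$ and $\frac{e^{2\beta t}-e^{-t}}{2\beta+1}-(1-e^{-t})$ respectively; adding the leftover $e^{-t}$ recovers $B_1(t)$ term by term. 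Everything else is a direct transcription of the estimates already established for Theorem~\ref{t:56}, so I anticipate no conceptual obstacle, only careful constant-tracking and the observation that $R_0$ enters solely as the bound $|\delta_0|\le R_0$.
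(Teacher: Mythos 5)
Your proposal is correct and follows essentially the same route as the paper: take the inner product of the error equation with $P\delta$, use Property \ref{p:2.1} item \ref{itm:d}, Lemma \ref{lm1} and Young's inequality to reproduce \eqref{diffequationpdelta}, integrate keeping the $|P\delta_0|^2$ boundary term, then feed the result into \eqref{bounddelta} and apply Gronwall. The explicit integrating-factor bookkeeping you carry out (the integrals $\int_0^t e^{-(t-s)}(e^{\beta s}-1)\,\textrm{d}s$ and $\int_0^t e^{-(t-s)}(e^{2\beta s}-1)\,\textrm{d}s$) correctly reconstructs $B_1$ and $B_2$, matching the paper's argument.
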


\begin{proof}
As in equation \eqref{diffequationpdelta} we have
$$
\frac{\textrm{d}|P\delta|^2}{\textrm{d}t}\le 16 K|\delta|^2+4 R_0^2e^{\beta t}|\delta|^2.
$$
On integrating from $0$ to $t$ as before, and noting that now $P\delta_0\neq 0$ in general, we obtain
$$
|P\delta(t)|^2 \le \left( \frac{16 K}{\beta} \{ e^{\beta t} -1\} +\frac{ 4 R_0^2}{2\beta}\{e^{2\beta t}-1\}\right)|\delta_0|^2+|P\delta_0|^2,
$$
which proves \eqref{boundpdelta}.

For the second inequality recall the bound \eqref{bounddelta}
$$
\frac{1}{2}\frac{\textrm{d}|\delta|^2}{\textrm{d}t}+|\delta|^2 \le  \frac{|\delta |^2}{2}+\frac{c^2K}{2}|P\delta |^2,
$$
and combine it with \eqref{boundpdelta} to get

$$
\frac{\textrm{d}|\delta|^2}{\textrm{d}t}+|\delta |^2 \le \left( \frac{16c^2 K^2}{\beta} \{ e^{\beta t} -1\} +\frac{ 4c^2K R_0^2}{2\beta}\{e^{2\beta t}-1\}\right)|\delta_0|^2+c^2K|P\delta_0|^2.
$$
Applying Gronwall's inequality yields \eqref{bounddeltalemma}.
\end{proof}

\end{document}